\def\@cite#1#2{{\normalfont[{#1\if@tempswa , #2\fi}]}}
\definecolor{tocolor}{rgb}{.1,.1,.1}
\definecolor{urlcolor}{rgb}{.2,.2,.6}
\definecolor{linkcolor}{rgb}{.1,.1,.5}
\definecolor{citecolor}{rgb}{.4,.2,.1}
\newcommandx{\thdef}[2]{
	\newaliascnt{#1}{theorem}  
	\newtheorem{#1}[#1]{#2}
	\aliascntresetthe{#1}  
	\newtheorem*{#1*}{#2}
	\expandafter\newcommand\expandafter{\csname #1autorefname\endcsname}{#2}
}
\newtheorem*{rep@theorem}{\rep@title}
\newcommand{\newreptheorem}[2]{%
\newenvironment{rep#1}[1]{%
 \def\rep@title{#2 \ref{##1}}%
 \begin{rep@theorem}}%
 {\end{rep@theorem}}}
\newtheorem{theorem}{Theorem}[section]
\theoremstyle{definition}
\theoremstyle{remark}
\theoremstyle{remark}
\newenvironment{example}
{\begin{ex}}%
{\hfill $\blacksquare$\end{ex}}
\newtheoremstyle{parag}
  {\topsep}   
  {\topsep}   
  {}  
  {}       
  {\bfseries} 
  {.}         
  { } 
  {}          
\theoremstyle{parag}
\newcommand{\spc}[1]{\mathsf{#1}} 
\newcommand{\shf}[1]{\mathcal{#1}} 
\newcommand{\RR}{\mathbb{R}}
\newcommand{\CC}{\mathbb{C}}
\newcommand{\HH}{\mathbb{H}}
\newcommand{\rbrac}[1]{\left(#1\right)} 
\newcommand{\IP}[1]{\left\langle#1\right\rangle} 
\newcommandx{\fn}[2][2=]{#1\ifthenelse{\equal{#2}{}}{}{\!\rbrac{{#2}}}} 
\newcommandx{\id}[2][2=]{\fn{{\rm id}_{#1}}[#2]} 
\newcommand{\ext}[2][\bullet]{\spc{\Lambda}^{#1}{#2}} 
\newcommandx{\End}[2][1=]{\fn{\spc{End}_{#1}}[#2]} 
\newcommandx{\Hom}[2][1=]{\fn{\spc{Hom}_{#1}}[#2]} 
\newcommandx{\Aut}[2][1=]{\fn{\spc{Aut}_{#1}}[#2]} 
\newcommandx{\image}[1]{\fn{\spc{img}}[#1]} 
\renewcommandx{\ker}[1]{\fn{\spc{ker}}[#1]} 
\newcommandx{\rank}[1]{\fn{\mathrm{rank}}[#1]} 
\newcommandx{\ann}[1]{\fn{\spc{ann}}[#1]} 
\newcommandx{\hlgy}[3][1=\bullet,3=]{\spc{H}_{#1}^{#3}\!\rbrac{{#2}}} 
\newcommandx{\cohlgy}[3][1=\bullet,3=]{\spc{H}^{#1}_{#3}\!\rbrac{{#2}}} 
\newcommandx{\chow}[3][1=\bullet,3=]{\spc{A}^{#1}_{#3}\!\rbrac{{#2}}} 
\newcommandx{\Ext}[3][1=\bullet,3=]{\fn{\spc{Ext}^{#1}_{#3}}[{#2}]} 
\newcommandx{\Tor}[3][1=\bullet,3=]{\fn{\spc{Tor}^{#1}_{#3}}[{#2}]} 
\newcommandx{\Pic}[1]{\fn{\spc{Pic}}[{#1}]} 
\newcommandx{\chernalg}[2][1=\bullet]{\fn{\spc{Chern}^{#1}}[{#2}]} 
\newcommandx{\chern}[2][1=]{\fn{c_{#1}}[#2]} 
\newcommandx{\ch}[2][1=]{\fn{\mathrm{ch}_{#1}}[{#2}]} 
\newcommandx{\sKer}[2][1=]{ \fn{ \shf{K}er_{#1}}[{#2}] } 
\newcommandx{\sHom}[2][1=]{ \fn{ \shf{H}om_{#1}}[{#2}] } 
\newcommandx{\sEnd}[2][1=]{ \fn{ \shf{E}nd_{#1}}[{#2}] } 
\newcommandx{\sExt}[3][1=\bullet,3=]{\fn{\shf{E}xt^{#1}_{#3}}[{#2}]} 
\newcommandx{\sTor}[3][1=\bullet,3=]{\fn{\shf{T}or^{#1}_{#3}}[{#2}]} 
\newcommandx{\forms}[2][1=\bullet]{\Omega^{#1}_{#2}} 
\newcommandx{\can}[1][1=]{\omega_{#1}} 
\newcommandx{\acan}[1][1=]{\omega_{#1}^{-1}} 
\newcommandx{\tshf}[1]{\shf{T}_{#1}} 
\newcommandx{\mvect}[2][1=\bullet]{ \ext[#1]{\tshf{#2}} }
\newcommandx{\der}[2][1=\bullet]{\mathscr{X}^{#1}_{#2}} 
\newcommandx{\sJet}[3][1=,2=]{\shf{J}^{#1}_{#2}#3} 
\newcommandx{\tb}[2][1=]{\spc{T}_{\!#1}{#2}} 
\newcommandx{\ctb}[2][1=]{\spc{T}_{\!#1}^*{#2}} 
\newcommandx{\lie}[2][2=]{\fn{\mathscr{L}_{#1}}[#2]} 
\newcommandx{\hook}[2][2=]{\fn{i_{#1}}[#2]} 
\newcommand{\del}{\partial}
\newcommand{\X}{\spc{X}}
\newcommand{\thickbar}{\mathpalette\@thickbar}
\newcommand{\@thickbar}[2]{{#1\mkern1.5mu\vbox{
  \sbox\z@{$#1\mkern-1mu#2\mkern-1mu$}%
  \sbox\tw@{$#1\overline{#2}$}%
  \dimen@=\dimexpr\ht\tw@-\ht\z@-.6\p@\relax
  \hrule\@height.4\p@ 
  \vskip1\p@
  \hrule\@height.4\p@ 
  \vskip\dimen@
  \box\z@}\mkern1.5mu}
}
\newcommand{\mat}[1]{\begin{pmatrix}#1\end{pmatrix}}
\newcommand{\OO}{\mathcal{O}}
\newcommand{\Hh}{\mathcal{H}}
\newcommand{\JJ}{\mathbb J}
\newcommand{\GG}{\mathbb G}
\newcommand{\MM}{\mathcal{M}}
\newcommand{\delbar}{\overline\partial}
\newcommand{\eps}{\epsilon}
\newcommand{\ol}{\overline}
\newcommand{\wt}{\widetilde}
\renewcommand{\Im}{\mathrm{Im}}
\newcommand{\BV}{\delta_\sigma}
\renewcommand{\X}{\mathfrak{X}}
\newcommand{\cT}{{T_\CC}^{\!\!*}}
\newcommand{\Tc}{{T_\CC}}
\renewcommand{\aa}{\pi_T}
    \newcommand\email[1]{\_email #1\q_nil}
    \def\_email#1@#2\q_nil{%
      \href{mailto:#1@#2}{{\emailfont #1@#2}}
    }
    \newcommand\emailfont{\sffamily}
    \newcommand\emailampersat{{\color{red}\small@}}
\begin{document}

\title{Generalized K\"ahler metrics from Hamiltonian deformations}
\date{}
\author{Marco Gualtieri\thanks{Department of Mathematics, University of Toronto; \email{mgualt@math.toronto.edu}}}

\maketitle
\centerline{\it Dedicated to Nigel Hitchin on the occasion of his seventieth birthday. }
\abstract{
We give a new characterization of generalized K\"ahler structures in terms of their corresponding complex Dirac structures. We then give an alternative proof of Hitchin's partial unobstructedness for holomorphic Poisson structures.  Our main application is to show that there is a corresponding unobstructedness result for arbitrary generalized K\"ahler structures.  That is, we show that any generalized K\"ahler structure may be deformed in such a way that one of its underlying holomorphic Poisson structures remains fixed, while the other deforms via Hitchin's deformation. 
Finally, we indicate a close relationship between this deformation and the notion of a Hamiltonian family of Poisson structures.}
\tableofcontents

\section{Introduction}

Generalized K\"ahler geometry, first described by Gates, Hull, and Ro\v{c}ek in~\cite{MR776369} in terms of a pair of complex structures $(I_+, I_-)$ compatible with the same Riemannian metric, is in some ways similar to Hyperk\"ahler geometry.  One of the most important aspects of hyperk\"ahler geometry is the presence of an underlying holomorphic symplectic structure.  Its counterpart in generalized K\"ahler geometry is the pair of holomorphic Poisson structures described by Hitchin~\cite{MR2217300} underlying any generalized K\"ahler manifold.  In fact, as shown in~\cite{MR3232003}, this structure is accompanied by a rich set of holomorphic structures, including a pair of holomorphic Courant algebroids, each of which contains a pair of complementary holomorphic Dirac structures.  

Using this underlying structure, a construction was introduced in~\cite{MR2681704,MR2371181} whereby a holomorphic Poisson structure on a Fano manifold was used to deform its usual K\"ahler structure into a generalized K\"ahler structure. A feature of this construction is that the resulting pair of complex structures $(I_+, I_-)$ are equivalent to the original one; they are related by a diffeomorphism.  
The stability theorems of Goto~\cite{MR2955201,MR2669364} provided existence results for more general types of deformations than the ones above, for example, a deformation of a Poisson K\"ahler manifold in which the complex structure bifurcates into a pair $(I_+,I_-)$ of inequivalent complex structures.  Inspired by these stability results, Hitchin observed in~\cite{MR3024823} a fundamental property of holomorphic Poisson structures in general: that for any closed $(1,1)$-form $\omega$ and holomorphic Poisson tensor $\sigma$, the class $[\sigma(\omega)]\in H^1(T)$ is unobstructed in the moduli space of complex structures.   

The purpose of this paper is to lift Hitchin's unobstructedness for holomorphic Poisson structures to arbitrary generalized K\"ahler structures.   That is, we show that any generalized K\"ahler structure may be deformed in such a way that one of its underlying holomorphic Poisson structures remains fixed, while the other deforms via Hitchin's deformation.  Our method may be applied to any of the deformations studied in Hitchin's paper, for instance producing a generalized K\"ahler structure where $I_+$ is a Hilbert scheme of points on a Poisson surface, and $I_-$ is the nontrivial deformation of this Hilbert scheme described by Fantechi~\cite{MR1354269}.  But it can also be applied to deform generalized K\"ahler structures which do not fit into the framework of Goto's stability theorems, such as the generalized K\"ahler structures on compact even-dimensional semisimple Lie groups~\cite{MR3232003}.

In order to explain our construction, we must recast Hitchin's deformation and generalized K\"ahler geometry itself purely in terms of Dirac structures~\cite{MR998124} and the action of closed 2-forms on them via B-field gauge transformations.  In Section~\ref{gaugepo} we recall how Poisson structures are described as Dirac structures, and how gauge transformations may be used to deform them, focusing on the crucial differences between the real and holomorphic cases.  In Section~\ref{GKconstruct} we give a new description of generalized K\"ahler geometry in Dirac terms (Proposition~\ref{recastgk}), and provide in Theorem~\ref{diracdef} a method for deforming generalized K\"ahler structures.  To produce the deformation, the theorem requires a real gauge equivalence between holomorphic Poisson structures; we study these next.  In Section~\ref{hitdef} we provide an alternative proof of Hitchin's unobstructedness result, generalizing it slightly in Theorem~\ref{defex} in a way suggested by~\cite{MR3007085}.  In Section~\ref{sevfam}, we observe that the Poisson deformations under consideration actually define \emph{Hamiltonian families} in the sense of \v{S}evera~\cite{MR1978549}. Finally, in Section~\ref{applic}, we give the promised lift of Hitchin's unobstructedness to arbitrary generalized K\"ahler structures (Theorem~\ref{defexreal} and Corollary~\ref{deformkahler}). 

\noindent \emph{Acknowledgements:}
We thank Nigel Hitchin for lengthy discussions on this topic in 2011 and apologize for the delay in publication. We also thank Joey van der Leer Dur\'an for helpful discussions and corrections.  This research is supported by an NSERC Discovery Grant.  

\section{Notation and basic operations on Dirac structures}\label{sums}

Let $M$ be a smooth manifold and $H\in \Omega^3(M)$ a closed 3-form. Recall that a Dirac structure is a maximal isotropic subbundle $L\subset T\oplus T^*$ of the direct sum of the tangent and cotangent bundles of $M$, that is involutive for the $H$-twisted Courant bracket.  Here isotropy refers to the natural split-signature inner product $\IP{X+\xi,Y+\eta}$ on $T\oplus T^*$, whereas the Courant bracket has the expression 
\begin{equation}
[X+\xi,Y+\eta]_H = \mathcal{L}_X(Y+\eta) - i_Yd\xi  + i_Yi_X H.
\end{equation}

The main tool we need from the theory of Dirac structures is the notion of tensor product of Dirac structures from~\cite{MR2811595}, but it will be convenient to use additive notation.  Let $L_1, L_2$ be Dirac structures for the 3-forms $H_1, H_2$ respectively, such that they are transverse in the sense that $\pi_T(L_1)+\pi_T(L_2) = T$, where $\pi_T:T\oplus T^*\to T$ is the projection.  Then their sum is defined by 
\begin{equation}
L_1 + L_2 = \{X+\alpha + \beta \ |\ X+\alpha\in L_1, X+\beta\in L_2\},
\end{equation} 
and is a Dirac structure for the 3-form $H_1+H_2$. 

We may also rescale a Dirac structure by a nonzero element $\lambda\in \RR$ (or $\CC$, in the case of complex Dirac structures in $\Tc\oplus \cT$) as follows:
\begin{equation}
\lambda L = \{X + \lambda\alpha \ |\ X+\alpha\in L\}.
\end{equation} 
This is a Dirac structure for the 3-form $\lambda H$.  
Because it occurs so often, we use the difference $L_2 - L_1$ to denote $(-1)L_1 + L_2$. Note that when $L_1, L_2$ are both integrable for the 3-form $H$, then $L_2 - L_1$ is Dirac for the zero 3-form. 

We may use the above operations to study intersections of Dirac structures, because of the following observation:
\begin{lemma}\label{interwed}
Let $L_1$ and $L_2$ be Dirac structures which are transverse in the sense  $\aa(L_1) + \aa(L_2) = T$.  Then the anchor map defines an isomorphism 
\begin{equation}
\xymatrix{\pi|_{L_1\cap L_2}: L_1\cap L_2\ar[r]^-{\cong} & (L_2- L_1)\cap T}.
\end{equation}
\end{lemma}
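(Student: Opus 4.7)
The plan is to check surjectivity and injectivity separately, with the transversality hypothesis entering only for injectivity. Both reductions come from unpacking the additive notation for $L_2 - L_1$.

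First, I would compute $(L_2-L_1)\cap T$ directly from the definition $L_2 - L_1 = (-1)L_1 + L_2$. A general element of $L_2-L_1$ has the form $X - \alpha + \beta$ with $X+\alpha\in L_1$ and $X+\beta\in L_2$. Requiring that the covector part vanish forces $\alpha = \beta$, so $X$ lies in $(L_2-L_1)\cap T$ if and only if there is a common $\alpha$ with $X+\alpha\in L_1\cap L_2$. This is exactly the statement that $(L_2-L_1)\cap T = \pi_T(L_1\cap L_2)$, so the anchor map $\pi|_{L_1\cap L_2}$ already lands in, and surjects onto, $(L_2-L_1)\cap T$ for free, without any transversality hypothesis.

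For injectivity, I need to show $L_1\cap L_2\cap T^* = 0$. Here I would use the standard fact that for any maximal isotropic $L\subset T\oplus T^*$ one has $L\cap T^* = \ann(\pi_T(L))$: indeed, $\xi\in T^*$ pairs with $X+\alpha\in L$ as $\tfrac12\xi(X)$, so $\xi\in L\cap T^* = L^\perp \cap T^*$ iff $\xi$ annihilates $\pi_T(L)$. Applying this to $L_1$ and $L_2$ and intersecting gives
\begin{equation}
L_1\cap L_2\cap T^* = \ann(\pi_T(L_1))\cap \ann(\pi_T(L_2)) = \ann\bigl(\pi_T(L_1)+\pi_T(L_2)\bigr) = \ann(T) = 0,
\end{equation}
where the transversality assumption $\pi_T(L_1)+\pi_T(L_2) = T$ is used precisely in the last step. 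This forces $\ker(\pi|_{L_1\cap L_2})=0$, completing the proof.

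There is no real obstacle here; the only subtlety is the dualization step in which transversality of the anchors is converted via annihilators into triviality of the cotangent intersection. Everything else is just chasing the definitions of sum, rescaling, and the canonical pairing.
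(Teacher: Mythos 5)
Your proof is correct and follows essentially the same route as the paper: surjectivity by unpacking the definition of $L_2-L_1$, and injectivity by showing the kernel $L_1\cap L_2\cap T^*$ vanishes. You actually make explicit the step the paper leaves implicit (and states with a notational slip), namely that for a maximal isotropic $L$ one has $L\cap T^* = \ann{\pi_T(L)}$, so transversality kills the intersection via annihilators.
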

\begin{proof}
An element of $(L_2-L_1)\cap T$ is a vector $X$ such that there exists $\alpha\in T^*$ such that $X + \alpha \in L_2$ and $X-\alpha\in -L_1$, but then $X+\alpha \in L_1\cap L_2$, showing surjectivity of the above map. But then the kernel is $L\cap \ol L\cap \cT$, which vanishes by the transversality assumption.
\end{proof}
This is most often used in the situation where $L_1$ and $L_2$ are complementary, i.e. $L_1\oplus L_2 = T\oplus T^*$.  In this case,  $L_1\cap L_2 = \{0\}$, and so $(L_2-L_1)\cap T=0$, implying that $L_2-L_1 = \Gamma_P$ is the graph of a (necessarily skew-symmetric) map $P:T^*\to T$. The Dirac structure $\Gamma_P$ is then involutive for $H=0$ precisely when $P\in C^\infty(\wedge^2 T)$ is a Poisson structure. This observation is a fundamental aspect of Dirac geometry~\cite{MR2642360,MR1262213}.  

Finally, we recall that closed 2-forms $B\in\Omega^2(M)$ act as automorphisms of the Courant bracket: the \emph{gauge transformation} 
\begin{equation}\label{bfield}
e^B: X+\xi \mapsto X + \xi + BX,  
\end{equation}
where $BX = i_X B$, preserves both the inner product $\IP{\cdot,\cdot}$ and the bracket $[\cdot,\cdot]_H$, and so there is an action of closed 2-forms on Dirac structures. A useful observation is that this action may be expressed in terms of the Dirac sum operation; for any Dirac structure $L$, 
\begin{equation}
e^B(L) = L + \Gamma_B,
\end{equation}
where $\Gamma_B = \{X + BX\ :\ X\in T\}$ is the graph of the 2-form $B$; since $dB=0$, $\Gamma_B$ is a Dirac structure for $H=0$.  

The action of gauge transformations on Dirac structures is fundamental for all that follows, as it can be used to produce deformations of any geometric structure which can be defined in terms of Dirac structures.

\section{Gauge transformations of Poisson structures}\label{gaugepo}

In this paper, we consistently view Poisson structures as Dirac structures.  In the case of a real Poisson structure $\pi\in C^\infty(\wedge^2 T)$ on a real smooth manifold $M$, this means that we encode $\pi$ in terms of its graph subbundle $\Gamma_\pi\subset T\oplus T^*$, defined by
\begin{equation}
\Gamma_\pi = \{\pi\xi + \xi \ |\ \xi\in T^*\}.
\end{equation}
As explained in~\cite{MR998124}, this subbundle $\Gamma_\pi$ is maximal isotropic for the natural split-signature bilinear form on $T\oplus T^*$, and is involutive for the Courant bracket if and only if 
$\pi$ satisfies the Jacobi identity $[\pi,\pi]=0$.  

The situation is different for a holomorphic Poisson structure, that is, a smooth manifold $M$ equipped with a complex structure $I$ and a holomorphic Poisson tensor $\sigma\in H^0(M,\wedge^2 T_{1,0})$ such that $[\sigma,\sigma]=0$.  There are two ways to view it as a Dirac structure: we may view $\sigma$ as a morphism $\cT\to\Tc$ and take its graph $\Gamma_\sigma\subset \Tc\oplus\cT$ as in the real case, or, more interestingly for us, we consider the Dirac structure
\begin{equation}\label{dirsig}
L_{\sigma}= \{ X + \sigma(\zeta) + \zeta\ |\ X\in {T}^{0,1}, \zeta\in T^*_{1,0}\}. 
\end{equation}
The advantage of the second approach is that $L_\sigma$ encodes both the complex structure and the Poisson tensor; indeed, $L_\sigma$ is involutive precisely when both the complex structure and the Poisson tensor satisfy their respective integrability conditions.  

We may characterize the Dirac structures of the form~\eqref{dirsig} in the following way.
\begin{proposition}\label{lsig}
Any complex Dirac structure $L$ for the untwisted Courant bracket such that $\Tc = (L\cap \Tc)\oplus (\ol L\cap\Tc)$ must be of the form~\eqref{dirsig}, for a holomorphic Poisson structure $(I,\sigma)$.
\end{proposition}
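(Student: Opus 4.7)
The plan is to read off the almost complex structure directly from $L \cap \Tc$, verify its integrability from involutivity of $L$, and then identify the remaining data of $L$ as the graph of a skew map $\sigma : T^*_{1,0} \to T^{1,0}$. The equivalence noted just after~\eqref{dirsig}---that $L_\sigma$ is involutive precisely when $I$ is integrable and $\sigma$ is holomorphic Poisson---will then promote $\sigma$ to a holomorphic Poisson structure for free.

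First, set $T^{0,1} := L \cap \Tc$ and $T^{1,0} := \ol L \cap \Tc$. These are complex-conjugate subbundles and, by the hypothesis, give a splitting $\Tc = T^{1,0} \oplus T^{0,1}$, which defines an almost complex structure $I$ on $M$. Since $T^{0,1} \subset \Tc$, the Courant bracket of two sections of $T^{0,1}$ reduces to their Lie bracket, and involutivity of $L$ forces the result to land in $L \cap \Tc = T^{0,1}$. Hence $T^{0,1}$ is Lie-involutive and $I$ is integrable.

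The heart of the proof is an isotropy-plus-dimension argument. Because $L$ is isotropic and contains $T^{0,1}$, it must lie in $(T^{0,1})^\perp = \Tc \oplus T^*_{1,0}$, and symmetrically $\ol L \subset \Tc \oplus T^*_{0,1}$; intersecting shows $L \cap \ol L \subset \Tc$, which the hypothesis collapses to $T^{0,1} \cap T^{1,0} = 0$. A dimension count then gives $L \oplus \ol L = \Tc \oplus \cT$. Passing to the quotient, $L/T^{0,1}$ is a half-dimensional isotropic, hence maximal isotropic, subbundle of $(\Tc \oplus T^*_{1,0})/T^{0,1} \cong T^{1,0} \oplus T^*_{1,0}$. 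Its intersection with $T^{1,0}$ comes from $L \cap T^{1,0} \subset L \cap \Tc = T^{0,1}$, which meets $T^{1,0}$ trivially, so $L/T^{0,1}$ projects isomorphically onto $T^*_{1,0}$ and is the graph of a linear map $\sigma : T^*_{1,0} \to T^{1,0}$. Isotropy forces $\sigma$ to be skew, so $\sigma \in \Gamma(\wedge^2 T^{1,0})$, and $L$ takes precisely the form of~\eqref{dirsig}, i.e.\ $L = L_\sigma$.

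With $L = L_\sigma$ for the integrable $I$ and a smooth bivector $\sigma$, the paper's stated equivalence for $L_\sigma$ immediately forces $\sigma$ to be holomorphic and to satisfy $[\sigma,\sigma] = 0$. I expect the isotropy-plus-dimension step in the third paragraph to be the main point requiring care; the remainder is bookkeeping, given the characterization of $L_\sigma$ already noted in the paper.
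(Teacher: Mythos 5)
Your proof is correct and follows essentially the same route as the paper's: both extract $I$ from $L\cap\Tc$, use maximal isotropy to see that $L$ projects onto $\mathrm{Ann}(T_{0,1})=T^*_{1,0}$, and realize the remaining data of $L$ as a skew map $\sigma:T^*_{1,0}\to T_{1,0}$ (the paper via the pairing of lifts, you via the graph of $L/T_{0,1}$, which is the same construction). Your explicit check that $T^{0,1}$ is Lie-involutive is a welcome detail the paper leaves to the reader, but it does not change the argument.
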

\begin{proof}
Define the complex structure $I$ to be such that $T_{0,1}=L\cap\Tc$.  Then by the maximal isotropic condition, the projection of $L$ to $\cT$ must coincide with the annihilator of $T_{0,1}$, i.e., $T^*_{1,0}$. Then, for $\zeta_1, \zeta_2\in T^*_{1,0}$, choose a lift $\tilde\zeta_1\in L$ of $\zeta_1$, unique up to $T_{0,1}$, and define $\sigma(\zeta_1,\zeta_2) = \langle\tilde\zeta_1, \zeta_2\rangle$, which is independent of the choice of lift. It is then straightforward to verify that $L$ coincides with~\eqref{dirsig}.   
\end{proof}

Two Poisson structures on the manifold $M$ are normally considered to be isomorphic when there is a smooth (or holomorphic, in the complex case) automorphism of $M$ relating them.  By viewing the Poisson structures as Dirac structures, however, we may act on them by Courant symmetries~\eqref{bfield}, enlarging the equivalence relation as follows.  We treat the real and complex situations separately. 
\begin{definition}
Real Poisson structures $\pi_0, \pi_1$ on $M$ are \emph{gauge equivalent} when there is a real closed 2-form $B$ such that 
\begin{equation}\label{gaugereal}
e^B\Gamma_{\pi_0} = \Gamma_{\pi_1}.
\end{equation}
\end{definition}
This equivalence relation, introduced in~\cite{MR2023853}, holds precisely when $(1+B\pi_0)$ is invertible as a bundle endomorphism of $T^*$ and the following identity holds:
\begin{equation}\label{gaugereal2}
\pi_1 = \pi_0 (1+ B\pi_0)^{-1}.
\end{equation}
From this we see that $\pi_0, \pi_1$ share the same image, i.e. define the same singular foliation.  As a result, they share the same kernel, which must be invariant under $(1+B\pi_0)$.  Equation~\eqref{gaugereal2} may therefore be restricted to any symplectic leaf, where $\pi_0, \pi_1$ are invertible, defining symplectic forms $\omega_0, \omega_1$, which then satisfy $\omega_1 = \omega_0 +  \iota^*B$, where $\iota$ is the inclusion of the leaf in question.  In summary, $\pi_0, \pi_1$ are gauge equivalent precisely when their symplectic leaves coincide, with symplectic forms differing by a the restriction of a global closed form. 

\begin{definition}\label{gaugcxp}
Holomorphic Poisson structures $(I_0,\sigma_0)$ and $(I_1,\sigma_1)$ are \emph{gauge equivalent} when there is a complex closed 2-form $\beta\in\Omega^2_\CC$ such that 
\begin{equation}\label{cxb}
e^\beta L_{\sigma_0} = L_{\sigma_1}.
\end{equation} 
\end{definition}
The simplicity of the above condition is somewhat deceptive, especially because of the fact that the underlying complex structures $I_0$ and $I_1$ are different as tensors and possibly even give rise to inequivalent complex manifolds.  We may analyse~\eqref{cxb} with the following equivalent formulation.  We use the notation $T_{1,0}(I_k)$ and $T_{0,1}(I_k)$ to denote the holomorphic and antiholomorphic tangent bundle, respectively, of the complex structure $I_k$, for $k=0,1$. 
\begin{proposition}\label{complexb}
The holomorphic Poisson structures $(I_0,\sigma_0)$ and $(I_1,\sigma_1)$ are gauge equivalent in the above sense if and only if there is a complex closed 2-form $\beta\in\Omega^2_\CC$ satisfying all of the following conditions:
\begin{itemize}
\item[i)] $\beta(T_{0,1}(I_0)) \subset T^*_{1,0}(I_1)$,
\item[ii)] $(1-\sigma_1\beta)(T_{0,1} (I_0)) \subset T_{0,1}(I_1)$,
\item[iii)] $(1+\sigma_0\beta)(T_{0,1}(I_1))\subset T_{0,1}(I_0)$,
\item[iv)] $(\sigma_1-\sigma_0 + \sigma_1\beta\sigma_0)(T^*_{1,0}(I_0))\subset T_{0,1}(I_1)$.
\end{itemize}
\end{proposition}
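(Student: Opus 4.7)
The plan is to prove the equivalence by direct computation: apply $e^\beta$ to a general element of $L_{\sigma_0}$, and impose the condition that the result lies in $L_{\sigma_1}$. A generic element of $L_{\sigma_0}$ has the form $X + \sigma_0\zeta + \zeta$ with $X \in T_{0,1}(I_0)$ and $\zeta \in T^*_{1,0}(I_0)$, and from the definition \eqref{bfield} of $e^\beta$ one computes
\[
e^\beta(X + \sigma_0\zeta + \zeta) \;=\; X + \sigma_0\zeta \;+\; \bigl(\zeta + \beta X + \beta\sigma_0\zeta\bigr).
\]
To match this with $Y + \sigma_1\eta + \eta$, for some $Y \in T_{0,1}(I_1)$, $\eta \in T^*_{1,0}(I_1)$, one reads off the unique candidate $\eta = \zeta + \beta X + \beta\sigma_0\zeta$ and then $Y = (1-\sigma_1\beta)X - (\sigma_1 - \sigma_0 + \sigma_1\beta\sigma_0)\zeta$.

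Next, I would separate the $X$-dependent and $\zeta$-dependent contributions and impose that $\eta$ and $Y$ land in the required subbundles for all $X, \zeta$. Requiring $\eta \in T^*_{1,0}(I_1)$ yields the two conditions $\beta(T_{0,1}(I_0)) \subset T^*_{1,0}(I_1)$ and $(1+\beta\sigma_0)(T^*_{1,0}(I_0)) \subset T^*_{1,0}(I_1)$; the first is (i). Requiring $Y \in T_{0,1}(I_1)$ yields exactly (ii) and (iv). Conversely, once all these conditions hold, the computation above produces a valid element of $L_{\sigma_1}$, so $e^\beta L_{\sigma_0} \subseteq L_{\sigma_1}$, and equality follows since both sides are maximal isotropic of the same rank.

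The key step, and the main subtlety, is to identify $(1+\beta\sigma_0)(T^*_{1,0}(I_0)) \subset T^*_{1,0}(I_1)$ with condition (iii) as stated. For this I would use that $T^*_{1,0}(I_k)$ is the annihilator of $T_{0,1}(I_k)$ together with the skew-symmetry $\beta^T = -\beta$, $\sigma_0^T = -\sigma_0$, which implies $(1+\beta\sigma_0)^T = 1 + \sigma_0\beta$. Then
\[
(1+\beta\sigma_0)(T^*_{1,0}(I_0)) \subset T^*_{1,0}(I_1) \;\Longleftrightarrow\; \bigl\langle Y,(1+\beta\sigma_0)\zeta\bigr\rangle = 0 \;\;\forall\, \zeta \in T^*_{1,0}(I_0),\; Y \in T_{0,1}(I_1),
\]
and transposing the pairing converts this into $(1+\sigma_0\beta)(T_{0,1}(I_1)) \subset T_{0,1}(I_0)$, which is precisely (iii). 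Putting this duality step together with the direct matching gives the equivalence claimed in the proposition.
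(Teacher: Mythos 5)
Your proposal is correct and follows essentially the same route as the paper: expand $e^\beta$ on the generators of $L_{\sigma_0}$, read off the conditions for membership in $L_{\sigma_1}$ componentwise, convert $(1+\beta\sigma_0)(T^*_{1,0}(I_0))\subset T^*_{1,0}(I_1)$ into condition (iii) by transposing the pairing using skew-symmetry of $\beta$ and $\sigma_0$, and conclude equality from containment of maximal isotropics. Your explicit spelling-out of the duality step is a welcome elaboration of what the paper dispatches with the phrase ``by duality,'' but the argument is the same.
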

\begin{proof}
We simply express the containment $e^\beta L_{\sigma_0}\subseteq L_{\sigma_1}$ explicitly. By definition, $X+\beta X\in e^\beta L_{\sigma_0}$ for all $X\in T_{0,1}(I_0)$, and this is contained in $L_{\sigma_1}$ if and only if, first, $\beta X$ lies in $T^*_{1,0}(I_1)$, giving condition $i)$, and second, that $X - \sigma_1\beta X$ lies in $T_{0,1}(I_1)$, giving condition $ii)$.  Also from the definition, $\sigma_0\xi + \xi + \beta\sigma_0\xi$ lies in $e^\beta L_{\sigma_0}$ for all $\xi\in T^*_{1,0}(I_0)$.  This section lies in $L_{\sigma_1}$ precisely when, first, $(1+\beta\sigma_0)\xi$ lies in $T^*_{1,0}(I_1)$, which is equivalent to condition $iii)$ by duality, and second, that  $\sigma_0\xi - \sigma_1(1+\beta\sigma_0)\xi$ lies in $T_{0,1}(I_1)$, which is condition $iv)$. Since these two subspaces of $e^\beta(L_{\sigma_0})$ span, and since containment of maximal isotropic subspaces implies equality, we obtain the result. 
\end{proof}

An important special case is when $\beta$ is real.  Let $Q_i$ be the imaginary part of $4\sigma_i$ for $i=0,1$, so that $\sigma_i = \tfrac{1}{4}(I_iQ_i + i Q_i)$.  Then conditions $i)$--$iv)$ above are cumulatively equivalent to the following conditions:
\begin{itemize}
\item[\itshape i)] $\beta I_0 + I_1^* \beta =0$,
\item[\itshape ii)] $I_0-I_1 = Q_1\beta$,
\item[\itshape iii)] $I_0-I_1 = Q_0\beta$,
\item[\itshape iv)] $Q_0=Q_1$.
\end{itemize}
Setting $F=\beta$ and $Q=Q_0=Q_1$, we obtain the following simplified conditions, first studied in~\cite{MR2681704}:
\begin{proposition}\label{realcxgauge}
The holomorphic Poisson structures $(I_0,\sigma_0)$ and $(I_1,\sigma_1)$ are gauge equivalent via a real closed 2-form $F\in\Omega^2_\RR$ if and only if they share an imaginary part, i.e. $Q = \Im(4\sigma_0) = \Im(4\sigma_1)$, and the following conditions hold:
\begin{equation}\label{splitcon}
\begin{aligned}
FI_0 + I_1^*F &= 0,\\
I_0-I_1 &= QF.
\end{aligned}
\end{equation}
\end{proposition}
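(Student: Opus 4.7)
The plan is to apply Proposition~\ref{complexb} with $\beta=F$ real and translate each of the four complex containment conditions $i)$--$iv)$ into identities between real tensors. I will use the real descriptions $T_{0,1}(I_k) = \{X+iI_kX : X\in T\}$ and $T^*_{1,0}(I_k) = \{\xi - iI_k^*\xi : \xi\in T^*\}$, the decomposition $\sigma_k = \tfrac14(I_kQ_k + iQ_k)$ as a map $\cT\to\Tc$, and the identity $I_kQ_k = Q_kI_k^*$ (which is equivalent to $\sigma_k\in\wedge^2 T_{1,0}(I_k)$), handling the four conditions cumulatively so that each translated identity is available when treating the next.

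The first three conditions translate directly to the italicized ones. Condition $i)$ expanded on $X+iI_0X$ separates into real and imaginary parts that, after one use of $I_1^{*2}=-1$, collapse to the single identity $FI_0 + I_1^*F = 0$. For condition $ii)$, expanding $(1-\sigma_1F)(X+iI_0X)\in T_{0,1}(I_1)$ yields $(I_0-I_1)X = \tfrac12(Q_1F + I_1Q_1FI_0)X$; using the already-derived $FI_0 = -I_1^*F$ together with $Q_1I_1^* = I_1Q_1$, the right-hand side simplifies to $Q_1FX$, so $I_0-I_1=Q_1F$. A symmetric computation with condition $iii)$ yields $I_0-I_1=Q_0F$, and in particular $(Q_0-Q_1)F=0$.

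The main obstacle is upgrading the weaker consequence $(Q_0-Q_1)F=0$ to the full equality $Q_0=Q_1$ using condition $iv)$. The conceptually cleanest route I see is to observe that $L_{\sigma_k}$ together with its complex conjugate encodes a real generalized complex structure on $T\oplus T^*$ whose upper-right block $T^*\to T$ computes to precisely $Q_k$; since a real closed $B$-field transform is block-triangular in the $T\oplus T^*$ splitting, this block is an invariant of the generalized complex structure, so the hypothesis $e^F L_{\sigma_0} = L_{\sigma_1}$ immediately yields $Q_0=Q_1$. Algebraically one can instead check that, once $FI_0 + I_1^*F=0$ and $Q_iF = I_0-I_1$ hold, the composite $\sigma_1F\sigma_0$ vanishes as an operator on $\cT$, reducing condition $iv)$ to the containment $(\sigma_1-\sigma_0)(T^*_{1,0}(I_0))\subset T_{0,1}(I_1)$; this identity, together with the individual $(2,0)$-compatibilities $I_kQ_k=Q_kI_k^*$ for $k=0,1$, then forces $Q_0=Q_1$.

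For the converse, assuming $Q_0=Q_1=Q$ together with the two simplified relations~\eqref{splitcon}, each of the italicized identities holds by inspection; reversing the translations of the preceding paragraphs recovers conditions $i)$--$iv)$ of Proposition~\ref{complexb}, and the resulting containment $e^F L_{\sigma_0}\subseteq L_{\sigma_1}$ upgrades to equality because both sides are maximal isotropic of the same rank.
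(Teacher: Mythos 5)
Your proposal is correct and follows the same route as the paper, which likewise obtains Proposition~\ref{realcxgauge} by specializing Proposition~\ref{complexb} to real $\beta=F$ and translating the four containments into real tensor identities (the paper merely asserts this "cumulative" translation without computation). You rightly flag that the only non-routine step is upgrading $(Q_0-Q_1)F=0$ to $Q_0=Q_1$ via condition $iv)$, and your first argument for it — that $\Gamma_{\Im(\sigma_k)}=\tfrac{1}{2i}(L_{\sigma_k}-\ol L_{\sigma_k})$ is the underlying real Poisson structure, which a real gauge transformation manifestly preserves since $\Gamma_F-\Gamma_F=\Gamma_0$ — is exactly the mechanism the paper relies on implicitly (cf.\ Proposition~\ref{gcdef} with $\Im(\beta)=0$ and Remark 3 after Proposition~\ref{recastgk}); your algebraic fallback also works, though as stated it needs the relations $I_0-I_1=Q_kF$ in addition to the $(2,0)$-compatibilities in its final step.
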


\begin{remarks}\mbox{}
\begin{enumerate}
\item Proposition~\ref{realcxgauge} may be rephrased in the following way:  if we fix a holomorphic Poisson structure $(I_0,\sigma_0)$, then the gauge transformation of $L_{\sigma_0}$ by a real closed 2-form $F\in\Omega^2_\RR$ is holomorphic Poisson if and only if 
\begin{equation}
FI_0 + I_0^*F = FQF.
\end{equation}
This condition is obtained from the conditions~\eqref{splitcon} by eliminating $I_1$, but if it is satisfied, then we may define $I_1 = I_0 - QF$, obtaining a complex structure which satisfies~\eqref{splitcon}.

\item The importance of this special case (i.e. a real gauge transformation of a complex Dirac structure) for our purposes is that it can be used to construct generalized K\"ahler structures.  The construction given in~\cite{MR2681704}, which we shall simplify and generalize in Section~\ref{GKconstruct}, produces a generalized K\"ahler structure in which one of the two generalized complex structures is symplectic, with the above 2-form $F$ playing the role of the symplectic form.

\item The crucial difference between gauge transformations of real and holomorphic Poisson structures is as follows.  In the real case, given a Poisson structure $\pi_0$, any closed 2-form $B\in\Omega^2_\RR$ such that $(1+B\pi_0)$ is invertible may be applied to $\pi_0$, deforming it to the new Poisson structure $\pi_1$ given by Equation~\eqref{gaugereal2}. Indeed, any sufficiently small closed 2-form satisfies this condition and so it is easy to deform Poisson structures in this way.  On the other hand, in the holomorphic case, the conditions of Proposition~\ref{complexb} impose nonlinear algebraic constraints on the closed 2-form $\beta$.  In terms of the foliation by holomorphic symplectic leaves, we are shifting the leafwise symplectic forms by $\beta$ but these must satisfy algebraic constraints in order to remain holomorphic symplectic forms.  As a result, we require further techniques in order to solve these conditions.  One such technique is provided in~\cite{MR2681704}, and in Section~\ref{applic} we provide a new, more general method which is inspired by Hitchin's unobstructedness result for holomorphic Poisson structures~\cite{MR3024823}.
\end{enumerate}
\end{remarks}

\section{Generalized complex and K\"ahler structures}\label{GKconstruct}

\subsection{An unobstructedness for generalized complex structures}

A generalized complex structure is a complex structure $\JJ$ on $T\oplus T^*$ whose $+i$-eigenbundle $L\subset \Tc\oplus\cT$ is a complex Dirac structure for the $H$--twisted Courant bracket.  Indeed, generalized complex structures may be defined simply as complex Dirac structures $L$ satisfying $L\cap \ol L = 0$.  In general, if we assume the transversality condition $\pi_\Tc(L) + \pi_\Tc(\ol L) = \Tc$, then by Lemma~\ref{interwed}, the intersection $L\cap\ol L$ projects isomorphically to $(L-\ol L)\cap \Tc$, so the complex Dirac structure $L$ defines a generalized complex structure if and only if $L\cap\ol L\cap \cT = 0$ and the following real Dirac structure,
\begin{equation}\label{pill}
\Gamma_\pi = \tfrac{1}{2i}( L - \ol L),
\end{equation}
has trivial intersection with $T$, i.e. defines a real Poisson structure $\pi$. This Poisson structure is fundamental for the study of generalized complex structures: in a sense made precise in~\cite{MR2811595,MR2240211,bailey-2012,BLMS:BLMS12029}, the generalized complex structure may be viewed as a holomorphic structure transverse to the symplectic leaves of $\pi$.  

This basic observation leads us to the following result concerning the ability to deform generalized complex structures by complex gauge transformations. 
\begin{proposition}\label{gcdef}
Fix a generalized complex structure $\JJ_0$, with $+i$ eigenbundle $L_0$ and underlying real Poisson structure $\pi_0$. Let $\beta\in\Omega^2_\CC$ be a complex closed 2-form. Then $L_1=e^\beta L_0$ defines a  generalized complex structure $\JJ_1$ if and only if $(1+B\pi_0)$ is invertible, for $B = \Im(\beta)$. The underlying real Poisson structure $\pi_1$ of $\JJ_1$ is then given by gauge transformation of $\pi_0$ by $B$, as in \eqref{gaugereal2}.
\end{proposition}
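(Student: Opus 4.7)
The plan is to reduce the statement to the real Poisson gauge equivalence recalled in Section~\ref{gaugepo}, by analysing $L_1\cap\ol L_1$ algebraically via Lemma~\ref{interwed}. Since $\beta$ is closed, $L_1 = e^\beta L_0$ is automatically a complex Dirac structure for the same $H$-twisted Courant bracket, so the content of the proposition is to detect when the remaining condition $L_1\cap\ol L_1 = 0$ holds, and then to extract $\pi_1$.

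A preliminary observation is that gauge transformations act trivially on cotangent parts: $e^\beta$ fixes $\cT$ pointwise. Hence $L_1\cap\cT = L_0\cap\cT$ and $\pi_\Tc(L_1) = \pi_\Tc(L_0)$, with analogous identities for $\ol L_1$. In particular both $L_1\cap\ol L_1\cap\cT = 0$ and the transversality $\pi_\Tc(L_1)+\pi_\Tc(\ol L_1) = \Tc$ are inherited automatically from $\JJ_0$, so Lemma~\ref{interwed} yields $L_1\cap\ol L_1 \cong (\ol L_1 - L_1)\cap\Tc$, reducing the generalized complex condition to the vanishing of the right hand side.

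The key step is then to rewrite $\ol L_1 - L_1$ in terms of $\pi_0$ and $B$. Using the sum description $L_1 = L_0 + \Gamma_\beta$, the identity~\eqref{pill}, and $\ol\beta - \beta = -2iB$, one computes
\begin{equation}
\ol L_1 - L_1 \;=\; (\ol L_0 - L_0) + (\Gamma_{\ol\beta} - \Gamma_\beta) \;=\; -2i\,\Gamma_{\pi_0} + (-2i)\,\Gamma_B \;=\; -2i(\Gamma_{\pi_0} + \Gamma_B).
\end{equation}
A direct inspection shows that $\Gamma_{\pi_0} + \Gamma_B$ consists of sections of the form $\pi_0\alpha + (1+B\pi_0)\alpha$ for $\alpha\in T^*$, and such a section lies in $T$ precisely when $(1+B\pi_0)\alpha = 0$. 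Any such $\alpha$ with $\pi_0\alpha = 0$ satisfies $\alpha = -B\pi_0\alpha = 0$, so $\pi_0$ is injective on the kernel of $(1+B\pi_0)$; thus $(\Gamma_{\pi_0}+\Gamma_B)\cap T = 0$ if and only if $(1+B\pi_0)$ is invertible, which is the stated criterion.

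Once $L_1$ is known to be generalized complex, the underlying real Dirac structure is $\Gamma_{\pi_1} = \tfrac{1}{2i}(L_1 - \ol L_1) = \Gamma_{\pi_0} + \Gamma_B = e^B\Gamma_{\pi_0}$, so by~\eqref{gaugereal2} $\pi_1$ is precisely the real gauge transformation of $\pi_0$ by $B$. The only real bookkeeping issue is to keep track of which part of the complex 2-form $\beta$ interacts with the real Poisson skeleton of $\JJ_0$; the calculation above pins this down to the imaginary part $B$, while $\Re\beta$ merely twists $L_1$ as a complex Dirac structure without changing $\pi_1$.
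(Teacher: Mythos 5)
Your argument is correct and follows essentially the same route as the paper's proof: writing $L_1 = L_0 + \Gamma_\beta$, using Lemma~\ref{interwed} to reduce $L_1\cap\ol L_1=0$ to the condition that $\Gamma_{\pi_0}+\Gamma_B = e^B\Gamma_{\pi_0}$ be the graph of a Poisson structure, and identifying $\pi_1$ via~\eqref{gaugereal2}. The only difference is that you verify the invertibility criterion for $(1+B\pi_0)$ by hand, where the paper simply cites the gauge-equivalence discussion of Section~\ref{gaugepo}.
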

\begin{proof}
We may express $L_1$ as a sum of Dirac structures in the sense of Section~\ref{sums}: if $\Gamma_\beta$ is the graph of $\beta$, then	
\begin{equation}
L_1 = e^\beta(L_0) = L_0 + \Gamma_\beta.
\end{equation}
The Dirac structure $L_1$ defines a generalized complex structure if and only if $L_1\cap \ol L_1 = 0$, which by Lemma~\ref{interwed} is equivalent to the condition that 
\begin{equation}
\tfrac{1}{2i}(L_1 -\ol L_1) = \tfrac{1}{2 i}(L_0 + \Gamma_\beta - \ol L_0 - \Gamma_{\ol\beta})  = \Gamma_{\pi_0} + \Gamma_{\Im\beta} 
\end{equation}
is a Poisson structure.  But this is precisely the gauge transformation $e^B\Gamma_{\pi_0}$, which is Poisson if and only if $(1+B\pi_0)$ is invertible.   Under this condition, Equation~\ref{gaugereal2} gives the expression for this Poisson structure, as required.
\end{proof}

The above result may be recast as an unobstructedness result for deformations of generalized complex structures.  As explained in~\cite{MR2811595}, infinitesimal deformations of generalized complex structures up to equivalence are given by the second Lie algebroid cohomology $H^2_L$ of the Dirac structure $L$.  The projection $\aa:L\to \Tc$ induces a pullback morphism $\aa^*:H^2(M,\CC)\to H^2_L$, with the following property.

\begin{corollary}
On a compact manifold, any infinitesimal deformation of the generalized complex structure $\JJ$ which lies in the image of the pullback map $\aa^*:H^2(M,\CC)\to H^2_L$ is unobstructed.  
\end{corollary}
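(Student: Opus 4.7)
The plan is to integrate the given infinitesimal deformation explicitly, using the complex gauge family provided by Proposition~\ref{gcdef}. Given a class $[\alpha]\in H^2_L$ in the image of $\aa^*$, I would choose a closed complex 2-form $\beta\in\Omega^2_\CC(M)$ with $\aa^*[\beta]=[\alpha]$ and form the one-parameter family $L_t := e^{t\beta}L_0$. The strategy is then to verify two facts: (a) $L_t$ is a genuine family of generalized complex structures for $t$ in a neighbourhood of $0\in\RR$, and (b) its first-order variation at $t=0$ represents the class $[\alpha]\in H^2_L$.

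For (a), I would apply Proposition~\ref{gcdef} to the complex closed 2-form $t\beta$: the subbundle $L_t$ is a generalized complex structure if and only if the bundle endomorphism $1+t\,\Im(\beta)\pi_0$ of $T^*$ is invertible. Since $M$ is compact, the endomorphism $\Im(\beta)\pi_0$ is uniformly bounded on $M$, so this invertibility condition holds for all $t$ in some open interval around $0$. Hence $L_t$ is a smooth one-parameter family of generalized complex structures extending $L_0$.

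For (b), I would use the standard identification of infinitesimal deformations of the Dirac structure $L_0$ with sections of $\wedge^2 L_0^*$, where $L_0^*$ is identified with $(\Tc\oplus\cT)/L_0$ via the split-signature pairing. The formula $e^{t\beta}(X+\xi)=X+\xi+t\beta(X)$ then shows that the first-order variation of $L_t$ at $t=0$ is the bilinear form $(s,s')\mapsto \langle \beta(\aa s), s'\rangle = \beta(\aa s,\aa s')$ on $L_0$, which is precisely $\aa^*\beta$. Since $\aa$ is a Lie algebroid morphism, the pullback $\aa^*$ is a cochain map intertwining the de Rham differential with the Lie algebroid differential on $\Gamma(\wedge^\bullet L_0^*)$, so the cohomology class of the first-order variation equals $\aa^*[\beta]=[\alpha]$, as required.

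The main subtle point is the identification in (b): one must carefully check that the linearization of the gauge action $\beta\mapsto \tfrac{d}{dt}\big|_{t=0} e^{t\beta}L_0$ coincides, as a cochain in $\Gamma(\wedge^2 L_0^*)$, with the anchor pullback $\aa^*\beta$. This is bookkeeping rather than a genuine obstruction; once it is verified, compactness of $M$ and Proposition~\ref{gcdef} together yield the conclusion with no further work.
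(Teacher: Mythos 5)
Your proposal is correct and follows essentially the same route as the paper: choose a closed representative $\beta$, form the family $L_t=e^{t\beta}L_0$, use compactness together with Proposition~\ref{gcdef} to guarantee $L_t\cap\ol L_t=0$ for small $t$, and identify the derivative at $t=0$ with $\aa^*[\beta]$. Your part (b) merely spells out the linearization step that the paper asserts in one line, so there is no substantive difference.
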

\begin{proof}
For any class $[\beta]\in H^2(M,\CC)$, choose a representative $\beta\in\Omega^2_\CC$ and define the family $\{L_t = e^{t\beta}L\}_{t\in\CC}$ of complex Dirac structures.  If $M$ is compact, then for $t$ sufficiently small the condition of Proposition~\ref{gcdef} is satisfied, implying $L_t\cap\ol L_t=0$, so that we obtain a family of generalized complex structures parametrized by a neighbourhood of $0\in\CC$.  Taking the derivative at $t=0$, we obtain the required deformation direction
\begin{equation}
[\tfrac{d}{dt} L_t]\big\rvert_{t=0} = \aa^*([\beta]).
\end{equation}
\end{proof}

\subsection{An unobstructedness for generalized K\"ahler structures}

We now describe the main idea of the paper: an analog of the above unobstructedness result for generalized K\"ahler structures.  A generalized K\"ahler structure is a pair $(\JJ_1,\JJ_2)$ of commuting generalized complex structures such that $\GG=-\JJ_1\JJ_2$ is positive-definite, in the sense that $\IP{\GG u,u}> 0$ for all $u\neq 0$ in $T\oplus T^*$.

Our first task is to recast the generalized K\"ahler condition on $(\JJ_1,\JJ_2)$ in terms of the corresponding pair of complex Dirac structures $(L_1, L_2)$.  The commutation condition $\JJ_1\JJ_2=\JJ_2\JJ_1$ is equivalent to the fact that each $\JJ_1$ eigenbundle decomposes into a direct sum of $\JJ_2$ eigenbundles, that is,
\begin{equation}
L_1 = (L_1\cap L_2) \oplus (L_1\cap \ol L_2) = \ell_+ \oplus \ell_-,
\end{equation}  
where we define $\ell_+ = L_1\cap L_2$ and $\ell_-=L_1\cap \ol L_2$.  
As a result we obtain the following decomposition:
\begin{equation}
\Tc\oplus \cT = \ell_+\oplus \ell_-\oplus \ol\ell_+\oplus\ol\ell_-.
\end{equation}
The positive-definiteness of $\GG$ is then equivalent to the condition that $\ell_+\oplus\ol\ell_+ = V_+\otimes\CC$ for a positive-definite subspace of $V_+\subset T\oplus T^*$. 

\begin{proposition}\label{recastgk}
The pair of complex Dirac structures $(L_1, L_2)$ defines a generalized K\"ahler structure if and only if it satisfies all of the following conditions:
\begin{itemize}
\item[i)] $L_1$ is transverse to $\ol L_1$, i.e., $\pi_{\Tc}(L_1) + \pi_{\Tc}(\ol L_1) = \Tc$, and similarly for $L_2$. 
\item[ii)] The real Dirac structures 
\begin{equation}\label{realpoiss}
\Gamma_{\pi_1} = \tfrac{1}{2i}(L_1-\ol L_1),\qquad \Gamma_{\pi_2} = \tfrac{1}{2i}(L_2-\ol L_2),
\end{equation}
define real Poisson structures, i.e. $\Gamma_{\pi_1}\cap T=\Gamma_{\pi_2}\cap T = 0$.

\item[iii)] The complex Dirac structures 
\begin{equation}\label{holpoiss}
L_{\sigma_+} = \tfrac{1}{2i}(L_1 - L_2),\qquad L_{\sigma_-} = \tfrac{1}{2i}(L_1 - \ol L_2),
\end{equation}
define holomorphic Poisson structures $(I_+,\sigma_+)$, $(I_-,\sigma_-)$ respectively, i.e. $\Tc = (L_{\sigma_+}\cap\Tc)\oplus (\ol L_{\sigma_+}\cap\Tc)$ and $\Tc = (L_{\sigma_-}\cap\Tc)\oplus (\ol L_{\sigma_-}\cap\Tc)$. 
\item[iv)] For all nonzero $u\in L_1\cap L_2$, we have $\IP{u,\ol u}>0$. 
\end{itemize}
\end{proposition}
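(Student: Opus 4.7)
The plan is to verify each of the four conditions in both directions, translating the generalized K\"ahler conditions on $(\JJ_1,\JJ_2)$ into statements about the eigenbundles $(L_1,L_2)$. The main tools will be Lemma~\ref{interwed}, which identifies the intersection of two Dirac structures with a $\Tc$-slice of their Dirac difference, together with Proposition~\ref{lsig} characterizing holomorphic Poisson Dirac structures.

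For the forward direction, assume $(\JJ_1,\JJ_2)$ is generalized K\"ahler. Each $L_k$ is the $+i$-eigenbundle of a generalized complex structure, so $L_k\cap\ol{L_k}=0$; this immediately implies (i), and (ii) is a restatement of \eqref{pill}. For (iii), I would use the commutation-induced splitting $L_1=\ell_+\oplus\ell_-$ with $\ell_+=L_1\cap L_2$ and $\ell_-=L_1\cap\ol{L_2}$, living in $V_+\otimes\CC$ and $V_-\otimes\CC$ respectively. By Lemma~\ref{interwed} (and rescaling-invariance of $L\cap\Tc$), the anchor identifies $\ell_\pm\cong L_{\sigma_\pm}\cap\Tc$. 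Positivity of $\GG$ forces $V_\pm\cap T^*=0$, so $\pi_T|_{V_\pm}$ is injective and a rank count gives $\pi_T(V_\pm)=T$; thus $\Tc=\pi_T(\ell_+)\oplus\pi_T(\ol{\ell_+})$ and similarly for $\ell_-$, which is precisely the transversality of Proposition~\ref{lsig}. For (iv): both $u\in\ell_+\subset L_1$ and $\ol u\in\ol{L_1}$ are isotropic, so $\langle u+\ol u,u+\ol u\rangle=2\langle u,\ol u\rangle$, which is positive since $u+\ol u\in V_+$.

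For the backward direction, assume (i)--(iv). Lemma~\ref{interwed} applied to $(L_k,\ol{L_k})$ gives $L_k\cap\ol{L_k}\cong(\ol{L_k}-L_k)\cap\Tc=\Gamma_{\pi_k}\cap\Tc=(\Gamma_{\pi_k}\cap T)\otimes\CC$, which vanishes by (ii); together with (i) this shows each $L_k$ is the $+i$-eigenbundle of a generalized complex structure $\JJ_k$. For commutation of $\JJ_1,\JJ_2$, equivalent to $L_1=\ell_+\oplus\ell_-$, Lemma~\ref{interwed} applied to $(L_1,L_2)$ and $(L_1,\ol{L_2})$ gives $\ell_+\cong L_{\sigma_+}\cap\Tc$ and $\ell_-\cong L_{\sigma_-}\cap\Tc$, which by (iii) and Proposition~\ref{lsig} each have complex rank equal to half the real dimension of $M$; since $\ell_+\cap\ell_-\subset L_2\cap\ol{L_2}=0$, a rank count forces $L_1=\ell_+\oplus\ell_-$. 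Finally, (iv) gives positive-definiteness of $\langle\cdot,\cdot\rangle$ on $V_+\otimes\CC=\ell_+\oplus\ol{\ell_+}$, hence on $V_+$; since $V_+\oplus V_-=T\oplus T^*$ is an orthogonal decomposition with $V_\pm$ of equal real rank inside a split-signature space, negative-definiteness on $V_-$ is forced, yielding positivity of $\GG$.

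The main obstacle I anticipate is managing the various transversality hypotheses needed for Lemma~\ref{interwed} and for the Dirac sums defining $L_{\sigma_\pm}$: in the forward direction, positivity of $\GG$ must be leveraged to give $V_\pm\cap T^*=0$ and hence the transversality of (iii); in the backward direction, this transversality is already built into the hypothesis that $L_{\sigma_\pm}$ are honest Dirac structures. Once these are clean, the remainder is a dimension count together with a signature argument that promotes the one-sided positivity in (iv) to full positive-definiteness of $\GG$.
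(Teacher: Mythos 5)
Your proof is correct and follows essentially the same route as the paper: Lemma~\ref{interwed} and Proposition~\ref{lsig} identify $\ell_\pm = L_1\cap L_2$, $L_1\cap\ol L_2$ as half-rank subbundles, a rank count gives $L_1=\ell_+\oplus\ell_-$ (hence commutation), and maximality of the positive-definite $V_+$ in the split-signature space forces positivity of $\GG$. The only difference is that you work out the converse direction (generalized K\"ahler implies \emph{i)--iv)}) in detail, which the paper simply declares straightforward and defers to a reference.
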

\begin{proof}
We first show that conditions \emph{i)-iv)} imply the generalized K\"ahler conditions. 
By Lemma~\ref{interwed}, conditions \emph{i)} and \emph{ii)} are equivalent to the conditions $L_1\cap \ol L_1 = 0$ and $L_2\cap \ol L_2 = 0$, which are necessary and sufficient for $L_1, L_2$ to define a pair $(\JJ_1,\JJ_2)$ of generalized complex structures.  By Proposition~\ref{lsig}, the Dirac sum $\tfrac{1}{2i}(L_1-L_2)$ is holomorphic Poisson if and only if its intersection with $\Tc$ is the $-i$ eigenspace of a complex structure.  Lemma~\ref{interwed} then identifies this intersection with $\ell_+ = L_1\cap L_2$, which must therefore have half the rank of $L_1$.  Repeating this argument with $\tfrac{1}{2i}(L_1-\ol L_2)$, we obtain another half-rank subbundle 
$\ell_- = L_1\cap\ol L_2$.  The condition $L_2\cap\ol L_2 = 0$ then implies that $L_1 = \ell_+\oplus\ell_-$, so that conditions \emph{i)}, \emph{ii)} and \emph{iii)} imply that $(L_1,L_2)$ define a commuting pair of generalized complex structures $(\JJ_1,\JJ_2)$.  The final condition \emph{iv)} then implies that the real subbundle $V_+\subset T\oplus T^*$ defined by $V_+\otimes\CC = \ell_+\oplus\ol\ell_+$ is positive definite, and since $\ell_+$ has half the rank of $\Tc$, this means that $V_+$ is a maximal positive-definite subbundle of $T\oplus T^*$, which finally implies that $\GG = -\JJ_1\JJ_2$ is positive-definite, as required.  
The converse, i.e. the fact that a generalized K\"ahler structure satisfies the conditions \emph{i)--iv)}, is straightforward and shown in~\cite{MR3232003}.
\end{proof}

\begin{remarks}\mbox{}\label{remarksred}
\begin{enumerate}
\item As explained in~\cite{MR3232003}, the holomorphic Poisson structures occurring in~\eqref{holpoiss} coincide (up to a $2i$ rescaling which will be convenient for us below) with the 
holomorphic Poisson structures described by Hitchin~\cite{MR2217300}, which play a central role in generalized K\"ahler geometry.  The observation of Proposition~\ref{recastgk} is that these Poisson structures encode the fact that $\JJ_1, \JJ_2$ commute. 
\item If conditions~\eqref{holpoiss} are satisfied pointwise, they imply the transversality of the pairs $(L_1,\ol L_1)$, $(L_2,\ol L_2)$, $(L_1,L_2)$, and $(L_1,\ol L_2)$ relative to $\Tc$, so there is no question that the Dirac sums are well-defined as Dirac structures. Furthermore, this shows that condition \emph{i)} is unnecessary in the Theorem.   
\item Expressions~\eqref{realpoiss} and \eqref{holpoiss} make several relationships between the real and holomorphic Poisson structures transparent.  For example, \eqref{holpoiss} implies that we have the coincidence of imaginary parts $\Im(\sigma_+) =\Im(\sigma_-)$, due to the following equality of Dirac structures:
\begin{equation}\label{impartsigma}
\Gamma_{\Im(\sigma_+)} = \tfrac{1}{2i}(L_{\sigma_+} - \ol L_{\sigma_+}) = \tfrac{1}{2i}(L_{\sigma_-} - \ol L_{\sigma_-}) = \Gamma_{\Im(\sigma_-)}.
\end{equation}

\item Conditions \emph{i)}, \emph{ii)}, and \emph{iv)} are open conditions and so remain true under small deformations, whereas \emph{iii)} is not.  Assuming \emph{iii)} holds, condition \emph{ii)} gives rise to a nondegenerate (but possibly indefinite) generalized K\"ahler metric.  Condition \emph{iv)} is needed for the positive-definiteness of this metric.  It is often useful to consider \emph{degenerate} generalized K\"ahler structures, where \emph{iii)} alone holds.
\end{enumerate}
\end{remarks}
\begin{definition}
A degenerate generalized K\"ahler structure is a pair $(L_1,L_2)$  of complex Dirac structures such that $L_{\sigma_\pm}$, as in~\eqref{holpoiss}, define holomorphic Poisson structures.
\end{definition}

We now extend Proposition~\ref{gcdef} to the generalized K\"ahler setting, where we have a pair $(L_1,L_2)$ satisfying the conditions of Proposition~\ref{recastgk}.   Let $\beta_1, \beta_2$ be a pair of closed complex 2-forms, defining deformations
\begin{equation}\label{gaugel12}
L_1' = e^{\beta_1} L_1\qquad L_2' = e^{\beta_2} L_2.
\end{equation}
The main question is: when do these deformed Dirac structures satisfy the conditions of Proposition~\ref{recastgk}, thereby  defining a deformation of generalized K\"ahler structure?
For sufficiently small $\beta_1,\beta_2$, conditions \emph{i)} and \emph{ii)} remain true.  We must check condition \emph{iii)}:
\begin{equation}\label{calcdef}
\begin{aligned}
\tfrac{1}{2i}(L_1' - L_2') &= \tfrac{1}{2i}(L_1 - L_2) + (\Gamma_{\beta_1/2i} - \Gamma_{\beta_2/2i}) = e^{\tfrac{1}{2i}(\beta_1-\beta_2)} L_{\sigma_+},\\
\tfrac{1}{2i}(L_1' - \ol L_2') &= \tfrac{1}{2i}(L_1 - \ol L_2) + (\Gamma_{\beta_1/2i} - \Gamma_{\ol \beta_2/2i}) = e^{\tfrac{1}{2i}(\beta_1-\ol\beta_2)} L_{\sigma_-}.
\end{aligned}
\end{equation}
From this calculation we see how the complex gauge transformations~\eqref{gaugel12} of $L_1, L_2$ give rise to corresponding complex gauge transformations of $L_{\sigma_+}$ and $L_{\sigma_-}$, by $\beta_+ = \tfrac{1}{2i}(\beta_1-\beta_2)$ and $\beta_- = \tfrac{1}{2i}(\beta_1-\ol\beta_2)$, respectively.  The map $(\beta_1,\beta_2)\to (\beta_+,\beta_-)$ is not an isomorphism; indeed, $\beta_\pm$ have the same imaginary parts, as expected from~\eqref{impartsigma}, and are not affected by shifting both $\beta_1,\beta_2$ by a real 2-form. Therefore, given $\beta_\pm$ with coincident imaginary parts, we may lift these uniquely to $(\beta_1,\beta_2)$ with opposite real parts, giving the following result.

\begin{proposition}\label{defgk}
Let $(L_1,L_2)$ be a degenerate generalized K\"ahler structure, i.e. a pair of complex Dirac structures such that $L_{\sigma_+}, L_{\sigma_-}$, defined by~\eqref{holpoiss}, define holomorphic Poisson structures.  Let $\beta_\pm = F_\pm + iB$ be closed 2-forms with coincident imaginary parts that define new holomorphic Poisson structures via the gauge transformations
\begin{equation}\label{givendef}
L'_{\sigma_+} = e^{\beta_+} L_{\sigma_+},\qquad L'_{\sigma_-} = e^{\beta_-} L_{\sigma_-}.
\end{equation}
Then this deformation lifts to the generalized K\"ahler structure: defining $\beta_1 = -B + i(F_- + F_+)$ and $\beta_2 = B + i(F_- - F_+)$, the pair 
\begin{equation}
L_1' = e^{\beta_1}L_1,\qquad L_2' = e^{\beta_2}L_2
\end{equation}
defines a degenerate generalized K\"ahler structure with underlying holomorphic Poisson structures~\eqref{givendef}.
\end{proposition}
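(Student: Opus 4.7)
The plan is to invoke the formal identity \eqref{calcdef} derived in the discussion preceding the proposition and then verify a short algebraic identity. To show that $(L_1',L_2')$ is a degenerate generalized K\"ahler structure with the prescribed underlying holomorphic Poisson structures, one needs to verify that the Dirac sums $\tfrac{1}{2i}(L_1'-L_2')$ and $\tfrac{1}{2i}(L_1'-\ol L_2')$ coincide with $L'_{\sigma_+}$ and $L'_{\sigma_-}$ respectively, since these are holomorphic Poisson by hypothesis. The identity \eqref{calcdef} applied with parameters $\beta_1, \beta_2$ expresses these sums as $e^{\gamma_+} L_{\sigma_+}$ and $e^{\gamma_-} L_{\sigma_-}$, where $\gamma_+ := \tfrac{1}{2i}(\beta_1-\beta_2)$ and $\gamma_- := \tfrac{1}{2i}(\beta_1-\ol\beta_2)$, so everything reduces to checking $\gamma_\pm = \beta_\pm$ for the stated choice of lifts.

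This is a direct computation. Substituting $\beta_1 = -B + i(F_- + F_+)$ and $\beta_2 = B + i(F_- - F_+)$, one finds $\beta_1 - \beta_2 = -2B + 2iF_+ = 2i(F_+ + iB) = 2i\beta_+$; and since $\ol\beta_2 = B - i(F_- - F_+)$, also $\beta_1 - \ol\beta_2 = -2B + 2iF_- = 2i\beta_-$. Dividing by $2i$ yields $\gamma_\pm = \beta_\pm$, so indeed $\tfrac{1}{2i}(L_1' - L_2') = e^{\beta_+} L_{\sigma_+} = L'_{\sigma_+}$ and $\tfrac{1}{2i}(L_1' - \ol L_2') = e^{\beta_-} L_{\sigma_-} = L'_{\sigma_-}$. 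Since $\beta_1, \beta_2$ are real linear combinations of $\beta_\pm$ and their conjugates, they are closed, so the gauge transformations $e^{\beta_1}, e^{\beta_2}$ are honest symmetries of the $H$-twisted Courant bracket and $L_1', L_2'$ are complex Dirac structures for the same background 3-form.

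There is no real obstacle here: the proof is essentially bookkeeping enabled by the formal identities of Section~\ref{sums}. The substantive content of the proposition lies not in its proof but in the observation that any simultaneous gauge equivalence between the two underlying holomorphic Poisson structures $L_{\sigma_\pm}$ --- provided the gauge parameters have coincident imaginary parts, as forced by the identity \eqref{impartsigma} --- always lifts to a gauge deformation of the generalized K\"ahler pair $(L_1, L_2)$. This reduction is what makes it possible, in Section~\ref{applic}, to promote Hitchin's unobstructedness for holomorphic Poisson structures to unobstructedness at the generalized K\"ahler level.
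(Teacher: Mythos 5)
Your proposal is correct and matches the paper's own argument, which is precisely the computation \eqref{calcdef} together with the inversion of the map $(\beta_1,\beta_2)\mapsto(\beta_+,\beta_-)$ carried out in the paragraph immediately preceding the proposition. The algebraic check $\tfrac{1}{2i}(\beta_1-\beta_2)=\beta_+$ and $\tfrac{1}{2i}(\beta_1-\ol\beta_2)=\beta_-$ is exactly what the paper relies on.
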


We present two applications of this result: first we show how to recover the construction of generalized K\"ahler structures in~\cite{MR2681704}, and then we present the more general deformation result. 

\begin{example}\label{flocon}
Let $(I,\sigma)$ be a holomorphic Poisson structure.  Consider the following smooth family of pairs of complex Dirac structures parametrized by $t\in\RR$:
\begin{equation}
L_1(t) =2i L_{t\sigma},\qquad L_2(t) = \Tc.
\end{equation} 
This is a family of degenerate generalized  K\"ahler structures, since  
\begin{equation}
L_{\sigma_+} = \tfrac{1}{2i}(L_1-L_2) = L_{t\sigma},\qquad L_{\sigma_-} = \tfrac{1}{2i}(L_1-\ol L_2) = L_{t\sigma}
\end{equation}
is indeed a pair of holomorphic Poisson structures.  

We now deform this pair using the method described in~\cite{MR2681704}, as follows.  Let $\omega\in\Omega^{1,1}(M,\RR)$ and $V$ a real vector field satisfying the system
\begin{equation}\label{systcrvreal}
\begin{aligned}
\mathcal{L}_V I &= Q \omega,\\
\mathcal{L}_V Q &=0,
\end{aligned}
\end{equation}
for $Q = \Im(4\sigma)$ .
For example, for any $f\in C^\infty(M,\RR)$  we may take $\omega = dd^c f$ and $V=-Qdf$ to be the associated Hamiltonian vector field.   
Let $\varphi_t$ be the time-$t$ flow of $V$, and  
\begin{equation}
\wt F(t) = \int_0^t \varphi_s(\omega) ds.
\end{equation}
As explained in~\cite{MR2681704},
this
defines a gauge transformation between $(I,\sigma)$ on the one hand and $(I_t, \sigma_t)=(\varphi_t(I), \varphi_t(\sigma))$ on the other.  In terms of Dirac sums, we have 
\begin{equation}
L_{\sigma} + \Gamma_{\wt F(t)} = L_{\sigma_t}.
\end{equation}  
Since $\wt F(0)=0$, we may define $F(t) = t^{-1}\wt F(t)$, and if we scale the above Dirac structures by $t^{-1}$, we obtain
\begin{equation}
L_{t\sigma} + \Gamma_{F(t)} = L_{t\sigma_t}.
\end{equation}  
Finally, applying Proposition~\ref{defgk} with $B = 0$, $F_+ = 0$, and $F_- = F(t)$, we obtain the following family of degenerate generalized K\"ahler structures
\begin{equation}
L_1'(t) = e^{iF(t)} L_{t\sigma/2i},\qquad L_2'(t) = e^{iF(t)}L_2(t) = \Gamma_{iF(t)}.
\end{equation}
Now observe that $L_1'(0) = T_{0,1}\oplus T^*_{1,0}$ and $F(0) = \omega$; if we choose $\omega$ to be a K\"ahler form, then $(L'_1(0), L'_2(0))$ is a genuine K\"ahler structure.  As a result, for sufficiently small $t$, the open conditions \emph{i)}, \emph{ii)} and \emph{iv)} of Proposition~\ref{recastgk} continue to hold, defining a deformation of generalized K\"ahler structure.  By construction, the corresponding holomorphic Poisson structures are then given by 
$L_{\sigma_+}' = L_{t\sigma}$ and $L_{\sigma_-}' = L_{t\sigma_t}$.
That is, $(I_+,\sigma_+) = (I, t\sigma)$ remains unchanged while $(I_-(t),\sigma_-(t)) = (\varphi_t(I), t\varphi_t(\sigma))$ is given by the flow $\varphi_t$. 
\end{example}
The above construction, in which a K\"ahler structure is deformed to a generalized K\"ahler structure in such a way that the complex structures $I_\pm$ are different but related by a diffeomorphism, is at the heart of the constructions given in~\cite{MR2371181} and~\cite{MR2681704}.  
It relies on finding a solution to the system~\eqref{systcrvreal}; 
in particular, the first equation $\mathcal{L}_VI=Q\omega$ may be solved if and only if the cup product $\sigma\cdot[\omega]\in H^1(T)$ vanishes.  
As explained in~\cite{MR2681704}, a solution to~\eqref{systcrvreal} is provided by any rank one Poisson module over a holomorphic Poisson manifold;  
Because the anticanonical line bundle of a Poisson manifold is always a Poisson module, we see that the above construction may be implemented on any Poisson Fano variety.  

Taking inspiration from the previous example, we now explain how an arbitrary generalized K\"ahler structure may be deformed, assuming that one is given a gauge transformation of one of the underlying holomorphic Poisson structures, as in Proposition~\ref{realcxgauge}.  Importantly, we will not assume that this gauge transformation derives from a solution to the system~\eqref{systcrvreal}; later we will see that it may instead involve a class $[\omega]$ for which $\sigma\cdot[\omega]\in H^1(T)$ is nonzero, indicating a nontrivial deformation of the underlying complex structure.  


\begin{theorem}\label{diracdef}
Let $(L_1,L_2)$ be the pair of complex Dirac structures defining a generalized K\"ahler structure as in Proposition~\ref{recastgk}, and let $\{F_t\in \Omega^2_\RR\}_{t\in\RR}$ be a smooth family of real closed 2-forms with $F_0=0$ and such that $e^{F_t}L_{\sigma_+}$ is a family of holomorphic Poisson structures. 
Then the deformation 
\begin{equation}\label{thedef}
(L_1'(t), L_2'(t)) = (e^{iF_t}L_1, e^{-iF_t}L_2)
\end{equation}
is generalized K\"ahler for $t$ sufficiently small, with corresponding holomorphic Poisson structures
\begin{equation}
L'_{\sigma_+}(t) = e^{F_t} L_{\sigma_+},\qquad L'_{\sigma_-}(t) = L_{\sigma_-}.
\end{equation}
Furthermore, if $\pi_1,\pi_2$ are the real Poisson structures underlying $L_1,L_2$, then the family~\eqref{thedef} is generalized K\"ahler in the maximal open interval containing $0$ in which both $(1+F_t\pi_1)$ and $(1+F_t\pi_2)$ are invertible. 
\end{theorem}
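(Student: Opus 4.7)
The plan is to invoke Proposition~\ref{defgk} directly with the data at hand. Set $\beta_+ = F_t$ and $\beta_- = 0$; both are real, so they share imaginary part $B=0$, with real parts $F_+ = F_t$ and $F_- = 0$. The formulas in Proposition~\ref{defgk} then give $\beta_1 = -B + i(F_- + F_+) = iF_t$ and $\beta_2 = B + i(F_- - F_+) = -iF_t$, reproducing the deformation~\eqref{thedef} on the nose. The hypothesis that $e^{F_t}L_{\sigma_+}$ is a family of holomorphic Poisson structures, together with the fact that $L_{\sigma_-}$ is unchanged (since $\beta_- = 0$), is exactly the input Proposition~\ref{defgk} requires, so we obtain immediately a family of \emph{degenerate} generalized K\"ahler structures $(L_1'(t), L_2'(t))$ with $L_{\sigma_+}'(t) = e^{F_t}L_{\sigma_+}$ and $L_{\sigma_-}'(t) = L_{\sigma_-}$, as claimed.

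To promote this degenerate structure to a genuine generalized K\"ahler structure I verify the remaining conditions \emph{i)}, \emph{ii)}, \emph{iv)} of Proposition~\ref{recastgk}. By Remark~\ref{remarksred}(2), condition \emph{i)} is a consequence of \emph{iii)} and so holds automatically wherever the latter does. Condition \emph{ii)} amounts, via Proposition~\ref{gcdef} applied separately to $L_1'(t)=e^{iF_t}L_1$ and $L_2'(t)=e^{-iF_t}L_2$ (with $B=\Im(\pm iF_t)=\pm F_t$), to the two invertibility conditions appearing in the theorem; when they hold, the underlying real Poisson structures $\pi_j'(t)$ are given explicitly by~\eqref{gaugereal2}. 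Condition \emph{iv)} is open and holds at $t=0$, hence persists in some neighbourhood of $0$. This already establishes the first sentence of the theorem.

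The maximal-interval statement then reduces to extending condition \emph{iv)}, the positive-definiteness of $\GG_t = -\JJ_1'(t)\JJ_2'(t)$, over the entire invertibility locus, since conditions \emph{i)}--\emph{iii)} have already been verified there. My approach is a continuity argument: the smooth family $\GG_t$ is positive-definite at $t=0$, and a first failure at an interior point $t_*$ would, by the characterisation of \emph{iv)}, produce a nonzero $u_*\in\ell_+(t_*)=L_1'(t_*)\cap L_2'(t_*)$ with $\IP{u_*,\ol{u_*}}=0$. The technical heart of the proof is to argue that such a degeneration is incompatible with the two invertibility hypotheses at $t_*$: one would use the identification of $\ell_+(t)$ with $L_{\sigma_+}'(t)\cap\Tc$ from Lemma~\ref{interwed} together with the explicit form of the gauge transformations to show that the isotropic vector $u_*$ forces one of the pairs $(L_j'(t_*),\ol{L_j'(t_*)})$ to fail to be transverse, violating condition \emph{ii)} and hence the invertibility of the corresponding operator on $\pi_j$. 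I expect this last implication---translating loss of positive-definiteness into a transversality failure controlled purely by the two invertibility conditions---to be the main obstacle.
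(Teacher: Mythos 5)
Your reduction of the first sentence of the theorem is correct and is essentially the paper's own route: the paper verifies conditions \emph{i)--iv)} of Proposition~\ref{recastgk} directly via the Dirac-sum calculus, which is exactly what your invocation of Proposition~\ref{defgk} with $F_+=F_t$, $F_-=B=0$ packages up, and your treatment of conditions \emph{i)} and \emph{ii)} (via the remark and via $\tfrac{1}{2i}(L_j'-\ol L_j') = e^{\pm F_t}\Gamma_{\pi_j}$) agrees with the paper.

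The gap is in the maximal-interval statement, where you explicitly leave the key step unproven. Your plan is to suppose a first failure of positivity at $t_*$, extract a null vector $u_*\in\ell_+(t_*)$ with $\IP{u_*,\ol{u_*}}=0$, and then somehow show this contradicts the invertibility of $(1+F_{t_*}\pi_j)$; you flag this last implication as ``the main obstacle.'' This route is unnecessarily hard, and as stated it is not a proof. The missing observation, which is how the paper closes the argument, is purely linear-algebraic: on the entire interval where \emph{i)--iii)} hold one has the decomposition $\Tc\oplus\cT = \ell_+\oplus\ell_-\oplus\ol\ell_+\oplus\ol\ell_-$ with $L_1'(t), L_2'(t)$ maximal isotropic, hence $\ell_+^\perp = L_1'(t)+L_2'(t) = \ell_+\oplus\ell_-\oplus\ol\ell_-$, so the pairing $\IP{\cdot,\cdot}:\ell_+\times\ol\ell_+\to\CC$ is \emph{perfect} and the induced symmetric form on the real bundle $V_+$ with $V_+\otimes\CC=\ell_+\oplus\ol\ell_+$ is \emph{nondegenerate} for every $t$ in the invertibility interval. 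A continuous family of nondegenerate symmetric bilinear forms over a connected interval has constant signature, so positive-definiteness at $t=0$ propagates to the whole interval; in particular the degeneration scenario you were trying to rule out simply cannot occur. Replacing your contradiction argument with this nondegeneracy-plus-continuity observation completes the proof.
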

\begin{proof}
We simply verify conditions \emph{i)--iv)} in Proposition~\ref{recastgk} for the pair $(L_1', L_2')$. In fact, by the second remark following the theorem, $i)$ follows from $ii)$ and $iii)$, so we focus on the latter two. 
Condition \emph{ii)} requires that $\tfrac{1}{2i}(L_1'-\ol L_1')$ is Poisson. But this is given by 
\begin{equation}
\tfrac{1}{2i}(L_1'-\ol L_1') =\tfrac{1}{2i}(L_1-\ol L_1)  + \tfrac{1}{2i}(\Gamma_{iF_t} - \Gamma_{-iF_t}) = e^{F_t}\Gamma_{\pi_1},
\end{equation}
which is Poisson precisely when $(1+F_t\pi_1)$ is invertible. The same condition for $L_2'$ shows that \emph{i)} is satisfied in the required interval around zero.
Condition \emph{iii)} is verified by assumption, in the sense that $F_t$ is assumed to have the property that
\begin{equation}
L'_{\sigma_+} = \tfrac{1}{2i}(L'_1 - L'_2) = L_{\sigma_+} + \tfrac{1}{2i}(\Gamma_{iF_t} - \Gamma_{-iF_t}) = e^{F_t}L_{\sigma_+}
\end{equation}
remains holomorphic Poisson ($L'_{\sigma_-} = L_{\sigma_-}$ remains unchanged). 
Finally, the positivity condition $iv)$: as long as conditions \emph{i), ii), iii)} hold, we have a decomposition
\begin{equation}
\Tc\oplus \cT = \ell_+ \oplus \ell_-\oplus \ol\ell_+\oplus\ol\ell_-,
\end{equation}
where $\ell_+ = L_1' \cap L_2'$ and $\ell_- = L_1'\cap \ol L_2'$.  Since $L_1', L_2'$ are maximal isotropic, $\ell_+^\bot = L_1+L_2 = \ell_+\oplus\ell_-\oplus\ol\ell_-$, and so the pairing $\IP{\cdot,\cdot}:\ell_+\times\ol\ell_+\to\CC$ is perfect, giving a nondegenerate symmetric bilinear form to the real space $V_+$ defined by $V_+\otimes\CC = \ell_+\oplus\ol\ell_+$.  Since we are assuming that this is positive-definite at $t=0$, continuity implies it will be positive-definite on the interval for which \emph{i)}, \emph{ii)} and \emph{iii)} hold, as claimed.
\end{proof}

Theorem~\ref{diracdef} lifts a gauge transformation of the Poisson structure $\sigma$ to a deformation of any generalized K\"ahler structure having $\sigma$ as one of its underlying Poisson structures. In the next section we explain how to obtain such gauge transformations in the first place.   

\section{Deformations of holomorphic Poisson structures}\label{hitdef}

Let $(I,\sigma)$ be a holomorphic Poisson structure on the smooth manifold $M$.  In this section we are interested in the deformation theory of such structures.  

\subsection{Deformations of holomorphic Poisson structures}
As we saw in Section~\ref{gaugepo}, the pair $(I,\sigma)$ defines, and is  determined by, the complex Dirac structure $L_\sigma$.  
A natural Dirac structure transverse to this one and independent of the Poisson structure is 
\begin{equation}\label{ndir}
N = T_{1,0}\oplus T^{*}_{0,1}.
\end{equation}
The Courant bracket on sections of $N$, given by $[X+\xi,Y+\eta] = [X,Y] + i_{X}\del \eta - i_{Y}\del \xi$, extends to a graded Lie bracket (also denoted by $[\cdot,\cdot]$) on sections 
\(C^{\infty}(M,\wedge^{\bullet} N)=\oplus_{p,q}\Omega^{0,q}(\wedge^{p}T_{1,0}) 
\).
The canonical inner product on $T\oplus T^{*}$ identifies $N$ with $L_{\sigma}^{*}$, so that the de Rham complex associated to the Lie algebroid $L_{\sigma}$ provides the above graded Lie algebra with a differential, namely $d_{\sigma} = \delbar + \del_{\sigma}$, where $\del_{\sigma} \psi = [\sigma,\psi]$, making the following a differential graded Lie algebra: 
\begin{equation}\label{dgla}
\left(\bigoplus_{p,q}\Omega^{0,q}(\wedge^{p}T_{1,0}),\ d_{\sigma}=\delbar + \del_{\sigma},\ [\cdot,\cdot ]\right).
\end{equation}
Obtaining a differential graded Lie algebra from a pair of transverse Dirac structures is a standard result of~\cite{MR1472888}, where it is also explained that Maurer-Cartan elements describe deformations of the original Dirac structure in question.  In our case, an element 
\begin{equation}\label{decompeps}
\eps=\rho + \phi + \gamma \in \Omega^{0,0}(\wedge^{2}T_{1,0}) \oplus \Omega^{0,1}(T_{1,0}) \oplus \Omega^{0,2}
\end{equation}
that satisfies the Maurer-Cartan equation $d_{\sigma}\eps + \tfrac{1}{2}[\eps,\eps]=0$ describes the following deformation of $L_{\sigma}$:
\begin{equation}\label{epslsigma}
L_{\sigma}^{\eps}= \{X + \phi X + \gamma X +  \sigma\zeta + \rho\zeta + \zeta - \phi^{*}\zeta  \ :\ X\in T_{0,1},\ \zeta\in T^{*}_{1,0}\}.
\end{equation}
Such a general deformation will, however, not necessarily define a holomorphic Poisson structure; instead, it describes (for $\eps$ sufficiently small) the complex Dirac structure underlying a generalized complex structure.  For the deformation to remain holomorphic Poisson, it is necessary and sufficient that $\gamma$, the component of the deformation lying in $\Omega^{0,2}$, vanish. Indeed (see \cite[Appendix A]{MR2065506}), deformations of holomorphic Poisson structure are controlled by the $p\geq 1$ truncation of the differential graded Lie algebra~\eqref{dgla}.
\begin{proposition}[{\cite[Appendix A]{MR2065506}}]
Deformations of the pair $(I,\sigma)$ are controlled by the $p\geq 1$ truncation of the dgLa~\eqref{dgla}, i.e. $(\Omega^{0,\bullet}(\wedge^{\geq 1}T_{1,0}), d_\sigma, [\cdot,\cdot])$.
\end{proposition}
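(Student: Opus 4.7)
The plan is to combine the Liu--Weinstein--Xu Dirac deformation theory~\cite{MR1472888} recalled above with the holomorphic Poisson characterization of Proposition~\ref{lsig}. Maurer--Cartan elements $\eps = \rho + \phi + \gamma$ of the full dgLa~\eqref{dgla} parametrize deformations $L_\sigma^\eps$ of $L_\sigma$ as a complex Dirac structure via~\eqref{epslsigma}; I need to identify which $\eps$ produce a Dirac structure of the holomorphic Poisson form $L_{\sigma'}$, and then show that this condition cuts out precisely the MC elements of the $p\geq 1$ truncation.

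First I would characterize the holomorphic Poisson locus. By Proposition~\ref{lsig}, the condition reduces to the transversality $\Tc = (L_\sigma^\eps\cap\Tc)\oplus(\ol{L_\sigma^\eps}\cap\Tc)$. An element of $L_\sigma^\eps$ written as in~\eqref{epslsigma} lies in $\Tc$ precisely when its cotangent component $\gamma X + \zeta - \phi^*\zeta$ vanishes; decomposing this into $T^*_{1,0}$ and $T^*_{0,1}$ parts forces $\zeta = 0$ and $\gamma X = 0$, so
\[
L_\sigma^\eps\cap\Tc = \{X + \phi X\ :\ X\in\ker\gamma\}.
\]
If $\gamma = 0$ this is the graph of $\phi:T_{0,1}\to T_{1,0}$, a subbundle of rank $\dim_\CC T_{0,1}$ which for small $\phi$ is transverse to its conjugate and serves as the antiholomorphic tangent bundle of a new complex structure; hence $L_\sigma^\eps$ is holomorphic Poisson. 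If instead $\gamma$ is nonzero at some point $x$, then the skew form $\gamma_x$ has a proper kernel, so $L_\sigma^\eps\cap\Tc$ drops in rank at $x$ and transversality fails. The holomorphic Poisson condition is thus exactly $\gamma\equiv 0$.

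Next I would verify that setting $\gamma = 0$ defines a sub-dgLa. The differential $\delbar$ preserves the multivector degree $p$, $\del_\sigma = [\sigma,\cdot\,]$ increases $p$ by one, and the Schouten bracket maps $\wedge^{p_1}T_{1,0}\otimes\wedge^{p_2}T_{1,0}$ into $\wedge^{p_1+p_2-1}T_{1,0}$, which remains in $p\geq 1$ whenever $p_1, p_2\geq 1$. Therefore MC elements of the truncated dgLa are exactly those MC elements of~\eqref{dgla} having $\gamma = 0$, which by the previous step are exactly those producing holomorphic Poisson deformations.

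The step that I expect to require the most care is matching the notion of equivalence. The degree-one piece of the truncation is $\Omega^{0,0}(T_{1,0})$, whose elements integrate under the gauge action to smooth isotopies of $M$ combined with induced transformations of $\sigma$. To conclude that the truncated dgLa controls deformations \emph{up to equivalence}, one must show that this gauge action precisely recovers the standard equivalence of holomorphic Poisson deformations by biholomorphisms intertwining Poisson tensors; this is the slightly delicate bookkeeping carried out in~\cite[Appendix A]{MR2065506}.
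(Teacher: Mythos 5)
Your argument is correct and matches the mechanism the paper itself invokes: the paper offers no proof of this proposition (it simply cites Appendix A of the reference), but the sentence immediately preceding it states exactly your key point --- that the deformation remains holomorphic Poisson if and only if the $\Omega^{0,2}$-component $\gamma$ vanishes --- and your computation of $L_\sigma^\eps\cap\Tc$ via Proposition~\ref{lsig}, together with the observation that the $p\geq 1$ part is a sub-dgLa (so that $\gamma=0$ Maurer--Cartan elements of the full algebra are precisely the Maurer--Cartan elements of the truncation), supplies the details the paper leaves to the citation. The one step you defer, matching the gauge action of $\Omega^{0,0}(T_{1,0})$ with the usual equivalence of holomorphic Poisson structures, is also precisely what the paper delegates to the cited appendix, so your proposal is consistent with the paper's approach.
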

Therefore, deformations of $(I,\sigma)$ are given by sections $\eps = \rho + \phi$ as in~\eqref{decompeps}, satisfying the Maurer-Cartan equation, which decomposes into three parts:
\begin{align}
    \delbar\phi + \tfrac{1}{2}[\phi,\phi] = 0\label{defcx}\\
    \delbar\rho + \del_\sigma \phi + [\rho,\phi] = 0\label{newhol}\\
        \del_\sigma \rho + \tfrac{1}{2}[\rho,\rho] = 0\label{jaco}
\end{align}

To first order, therefore, a deformation of holomorphic Poisson structure is given by a solution to the linearized system
\begin{equation}\label{firstor}
    \delbar\phi = 0\qquad \delbar\rho + \del_\sigma\phi = 0\qquad \del_\sigma \rho = 0.
\end{equation}
Identifying infinitesimal deformations when they differ by an infinitesimal diffeomorphism, we obtain an expression for the tangent space to the moduli space $\MM$ of deformations of holomorphic Poisson structure: 
\begin{equation}
T_{[(I,\sigma)]}\MM = \frac{\{\rho+\phi\ :\ d_\sigma(\rho+\phi) = 0\}}{\{d_\sigma Y\ :\ Y\in \Omega^{0,0}(T_{1,0})\}} 
= \HH^1(\X^{\geq 1}[1]),
\end{equation}
where the last expression uses the notation of hypercohomology for a complex of sheaves:  $\X^{\geq 1}[1]$ denotes the truncated complex of holomorphic multivector fields:
\begin{equation}
{\X}^{\geq 1}[1] =
\xymatrix{ \X^1\ar[r]^-{\del_{\sigma}}& {\X}^{2}\ar[r] &\cdots},\qquad \X^k = \OO(\wedge^k T_{1,0}).
\end{equation}

\subsection{Hitchin's unobstructedness result} 
We now review and generalize some results of~\cite{MR3024823}, relaxing certain hypotheses as suggested by~\cite{MR3007085}. We focus on Hitchin's unobstructedness result, which states that on a compact holomorphic Poisson manifold, Kodaira-Spencer classes in the image of the map $[\sigma]:H^{1}(M,\Omega^{1})\to H^{1}(M,T)$ induced by the Poisson tensor $\sigma$ are unobstructed, assuming that the natural map $H^{2}(M,\CC)\to H^{2}(M,\OO)$ is surjective. 
We observe that the mechanism behind the proofs given in~\cite{MR3024823,MR3007085} depends on two basic facts. 

First, the projection $\pi:L_\sigma\to\Tc$, the anchor of the Lie algebroid $L_\sigma$, has dual $\pi^*:\cT\to L^*_\sigma\cong N$ given by 
\begin{equation}
\pi^* = -\sigma\oplus\id{}:T^*_{1,0}\oplus T^*_{0,1}\to T_{1,0}\oplus T^*_{0,1},
\end{equation}
inducing the following dgLa homomorphism:
\begin{equation}\label{dglahom}
\left(\oplus_{p,q}\Omega^{p,q}(M),\ d=\delbar + \del,\ [\cdot,\cdot ]_{\sigma}\right)\xrightarrow{\oplus_{p}\wedge^{p}\pi^*}
\left(\oplus_{p,q}\Omega^{0,q}(\wedge^{p}T_{1,0}),\ d_{\sigma}=\delbar + \del_{\sigma},\ [\cdot,\cdot ]\right).
\end{equation}
The domain of this morphism is the dgLa defined by the complexified de Rham complex of $M$,  equipped with the Koszul bracket $[\cdot,\cdot]_{\sigma}$, given on 1-forms $\xi,\eta\in \Omega^{1}_{\CC}$ by $[\xi,\eta]_{\sigma} = -(L_{\sigma\xi}\eta - L_{\sigma\eta}\xi - d\sigma(\xi,\eta))$.
This fact, first shown in~\cite[\S 6.6]{MR1077465}, may be seen as follows.  That $\pi^*$ commutes with differentials follows from dualizing the Lie algebroid morphism $\pi:\L_\sigma\to \Tc$. On the other hand, the fact that $\pi^*$ is bracket-preserving follows from the fact that $\cT$ may be identified with the Dirac structure $\Gamma_{-\sigma}=\{-\sigma(\eta) + \eta\ :\ \eta\in\cT\}$, from which it inherits the bracket $[\cdot,\cdot]_\sigma$. Since $\Gamma_{-\sigma} = N - L_{\sigma}$, it has a natural Lie algebroid morphism to $N$ given by $-\sigma(\zeta) + \zeta \mapsto -\sigma(\zeta) + \zeta^{0,1}$, which coincides with $\pi^*$ after the identification $\cT \cong L_{-\sigma}$. 
%
%

The significance of this first fact is that we may produce a Poisson deformation by first solving the Maurer-Cartan equation for $\omega\in\Omega^{(2,0) + (1,1)}_\CC$, namely
\begin{equation}\label{mckosz}
     d\omega + \tfrac{1}{2} [\omega,\omega]_\sigma = 0,
\end{equation}
and then transport it to the Poisson deformation complex via $\pi^*$, obtaining the Poisson deformation  
\begin{equation}\label{trnspt}
\rho = \wedge^2\sigma(\omega^{2,0}),\qquad \phi = -\sigma(\omega^{1,1}).
\end{equation}

The second crucial fact concerns solving the Maurer-Cartan equation~\eqref{mckosz}.  As observed by Koszul~\cite{MR837203}, the Koszul bracket $[\cdot,\cdot]_\sigma$ satisfies a Bogomolov-Tian-Todorov lemma, meaning that it can be computed in terms of the operator $\delta_{\sigma} = i_{\sigma} d - d i_{\sigma}$ via the  derived bracket formula:
\begin{equation}\label{BTTL}
    [\alpha,\beta]_\pi = \alpha\wedge\BV(\beta) - (-1)^k\BV(\alpha\wedge\beta) + (-1)^{k}\BV(\alpha)\wedge \beta,
\end{equation}
where $\alpha$ has degree $k$ and $\beta$ is of arbitrary degree.  

The strategy employed in~\cite{MR3024823}, then, is to use the Tian-Todorov lemma~\eqref{BTTL} to construct a solution to~\eqref{mckosz} with prescribed first-order part ($\omega(0) = 0$ and $\dot\omega(0)=\omega_1$, an arbitrary closed form with vanishing $(0,2)$ part) and then transport it via~\eqref{trnspt} to a Poisson deformation with underlying Kodaira-Spencer class $[\sigma(\omega_{1}^{1,1})]$, as required.  

We now provide an alternative proof of this result which will be useful for our application to generalized K\"ahler structures.  
We may motivate our approach as follows.  
In~\cite{MR3007085}, the derived bracket formula~\eqref{BTTL} is used to prove a stronger result: the dgLa defined by the Koszul bracket on the de Rham complex is in fact \emph{formal}, that is, it receives a $L_{\infty}$ quasi-isomorphism $\psi= (\psi_1,\psi_2,
\ldots)$ from the de Rham complex equipped with the zero bracket. 
This immediately implies that a solution to the linear Maurer-Cartan equation $d\beta = 0$ is taken by $\psi$ to a solution 
\begin{equation}\label{formc}
    \omega = \psi(\beta)= \psi_1(\beta) + \tfrac{1}{2} \psi_2(\beta,\beta) + \tfrac{1}{3!}\psi_3(\beta,\beta,\beta) + \cdots
\end{equation}
to the nonlinear Maurer-Cartan equation~\eqref{mckosz}.  
In \cite[Theorem 4.2]{Gualtieri:2017aa} it is shown that the formality map~\eqref{formc} given in~\cite{MR3007085} has the following geometric description: so long as $(1+\sigma\beta)$ is invertible, the graphs $\Gamma_{\beta}, \Gamma_{-\sigma}$ in $\Tc\oplus \cT$ are complementary Dirac structures. Therefore $\Gamma_{\beta}$ may be described as the graph of a map $\omega:\Tc\to\Gamma_{-\sigma}$.  Identifying $\Gamma_{-\sigma}\cong \cT$, we see that $\omega$ defines a complex 2-form, and this is precisely $\psi(\beta)$. That is, $\omega$ is the unique complex 2-form such that 
\begin{equation}\label{betaomega}
\Gamma_{\beta} = \{ X - \sigma(\omega X) + \omega X\ :\ X\in \Tc\}.
\end{equation}
The Maurer-Cartan equation for $\omega$ is then precisely the integrability of the Dirac structure~\eqref{betaomega}, which of course holds if and only if $d\beta = 0$.  
We may use the above expression to solve for $\omega$; we obtain 
\begin{equation}
\psi(\beta) = \omega = (1+\beta\sigma)^{-1}\beta = \beta - \beta\sigma \beta + \beta\sigma \beta\sigma \beta + \cdots.
\end{equation}
Comparing this with the formality map~\eqref{formc}, we see that $\tfrac{1}{k!}\psi_k(\beta,\cdots,\beta) = (-\beta\sigma)^{k}\beta$.  We summarize the formality discussion as follows:  
\begin{lemma}\label{siform}
Let $\beta\in\Omega^2_\CC$ such that $d\beta=0$. Then, so long as $1+\beta\sigma:\Tc \to \Tc$ is invertible,
the 2-form given by  
\begin{equation}\label{siex}
    \omega = \psi(\beta)  = (1+\beta\sigma)^{-1}\beta = \beta - \beta\sigma \beta + \beta\sigma \beta\sigma \beta + \cdots
\end{equation}
is a solution to the Maurer-Cartan equation
\begin{equation}\label{mckos}
d\omega + \tfrac{1}{2}[\omega,\omega]_\sigma = 0.
\end{equation}
\end{lemma}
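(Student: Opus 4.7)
My plan is to recognize the Maurer--Cartan equation \eqref{mckos} as the integrability condition for an explicit Dirac structure built from $\omega$, and then to deduce that integrability from the hypothesis $d\beta=0$ via the identity foreshadowed in \eqref{betaomega}. The guiding observation is that $\Tc$ (viewed as the Dirac structure $\{X+0 : X\in\Tc\}$) and $\Gamma_{-\sigma} = \{-\sigma(\eta)+\eta : \eta\in\cT\}$ form a complementary pair of Dirac structures in $\Tc\oplus\cT$, and that under the identification $\Gamma_{-\sigma}\cong\cT$ recalled in the excerpt around~\cite{MR1077465}, the Lie algebroid bracket on $\Gamma_{-\sigma}$ transports to the Koszul bracket $[\cdot,\cdot]_\sigma$ on $\cT$. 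By the Lie bialgebroid framework applied to this complementary pair (specifically \cite[Theorem~4.2]{Gualtieri:2017aa}, cited just above the lemma), the graph in $\Tc\oplus\cT$ of any skew map $\omega:\Tc\to\Gamma_{-\sigma}$ is an involutive Dirac structure precisely when $\omega\in\Omega^2_\CC$ satisfies $d\omega + \tfrac12[\omega,\omega]_\sigma = 0$.

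With this framework in place, the proof reduces to two algebraic verifications. First, I would check that $\omega = (1+\beta\sigma)^{-1}\beta$ is a well-defined 2-form: invertibility is the hypothesis, and skew-symmetry follows by writing equivalently $\omega = \beta(1+\sigma\beta)^{-1}$, using the identity $(1+\beta\sigma)\beta = \beta(1+\sigma\beta)$, and then taking transposes with $\beta^* = -\beta$ and $\sigma^* = -\sigma$. Second, I would establish the key identity
\[
\Gamma_\beta \;=\; L_\omega \;:=\; \{X - \sigma(\omega X) + \omega X \ :\ X\in\Tc\}.
\]
The computation $\sigma\omega = \sigma\beta(1+\sigma\beta)^{-1} = 1 - (1+\sigma\beta)^{-1}$ gives $1-\sigma\omega = (1+\sigma\beta)^{-1}$, an invertible endomorphism of $\Tc$. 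Setting $X' = (1-\sigma\omega)X$ then yields $\omega X = \beta X'$, so the element $X - \sigma(\omega X) + \omega X$ equals $X' + \beta X'$; since $1-\sigma\omega$ is invertible, $X'$ ranges over all of $\Tc$ as $X$ does, giving $L_\omega = \Gamma_\beta$.

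To conclude: since $d\beta = 0$ by hypothesis, $\Gamma_\beta$ is an involutive Dirac structure for the zero 3-form, and so is $L_\omega$ by the identity just established. But $L_\omega$ is by construction the graph of the map $X\mapsto \omega X$ from $\Tc$ to $\Gamma_{-\sigma}$ under the splitting $\Tc\oplus\cT = \Tc\oplus\Gamma_{-\sigma}$, and the first paragraph identifies its integrability with the Maurer--Cartan equation \eqref{mckos}. The only conceptual obstacle is confirming that the Lie algebroid bracket on $\Gamma_{-\sigma}$ corresponds to $[\cdot,\cdot]_\sigma$ on $\cT$; this is a classical fact recalled in the excerpt, verified by expanding the Courant bracket of two sections of $\Gamma_{-\sigma}$ and comparing with the defining formula for the Koszul bracket on 1-forms.
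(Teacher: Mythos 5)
Your proof is correct and follows essentially the same route as the paper, which establishes the lemma via the discussion immediately preceding it: $\Gamma_\beta$ and $\Gamma_{-\sigma}$ are complementary, $\omega$ is the 2-form expressing $\Gamma_\beta$ as a graph over $\Tc$ valued in $\Gamma_{-\sigma}\cong\cT$, and the Maurer--Cartan equation for the Koszul dgLa is exactly involutivity of that graph, which holds iff $d\beta=0$. The only difference is that you spell out the algebra behind the identity $\Gamma_\beta=\{X-\sigma(\omega X)+\omega X\}$ (via $1-\sigma\omega=(1+\sigma\beta)^{-1}$) and the skew-symmetry of $\omega$, steps the paper delegates to the citation of \cite[Theorem 4.2]{Gualtieri:2017aa}.
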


This means that we can supplement the above strategy, first solving $d\beta=0$, mapping it to a Maurer-Cartan element $\omega = \psi(\beta)$ and finally to a deformation $\pi^*(\psi(\beta))$ of $L_\sigma$, composing the following morphisms.
\begin{equation}\label{compol}
\xymatrix{(\Omega^\bullet, d, 0)\ar[r]^-{\psi} & (\Omega^\bullet, d, [\cdot,\cdot]_\sigma)\ar[r]^-{\pi^* } & (\X^\bullet, d_\sigma, [\cdot,\cdot])  }
\end{equation}

Given the ideas in Section~\ref{gaugepo}, it is no surprise that a closed 2-form $\beta$ may be used to deform a Poisson structure; indeed, we now show that deforming $L_\sigma$ by $\pi^*(\psi(\beta))$ is equivalent to applying the gauge transformation $\beta$:
\begin{theorem}\label{composb}
Let $\beta\in\Omega^2_\CC$ such that $d\beta=0$, and suppose that $1+\beta\sigma$ is invertible.  Then, the Maurer-Cartan element $\pi^*(\psi(\beta))$ deforms $L_\sigma$ to $e^\beta L_\sigma$.  That is, on Maurer-Cartan elements, the composition~\eqref{compol} defines a gauge transformation of the Dirac structure $L_\sigma$. 
\end{theorem}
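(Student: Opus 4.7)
The plan is a direct verification: compute $e^\beta L_\sigma$ and $L_\sigma^{\pi^*\psi(\beta)}$ explicitly and match them term by term as subbundles of $\Tc\oplus \cT$.

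First, using Lemma \ref{siform}, set $\omega := \psi(\beta) = (1+\beta\sigma)^{-1}\beta$. Since $\pi^* = -\sigma \oplus \id{}$ on $T^*_{1,0}\oplus T^*_{0,1}$, applying $\wedge^2 \pi^*$ to the Dolbeault decomposition $\omega = \omega^{2,0} + \omega^{1,1} + \omega^{0,2}$ gives the Maurer--Cartan element $\eps = \pi^*\omega$ with components
$$\rho = \wedge^2\sigma(\omega^{2,0}), \qquad \phi = -\sigma(\omega^{1,1}), \qquad \gamma = \omega^{0,2},$$
so that formula \eqref{epslsigma} provides an explicit parametrization of $L_\sigma^\eps$ by $(X,\zeta')\in T_{0,1}\times T^*_{1,0}$.

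For the other side, $e^\beta L_\sigma = L_\sigma + \Gamma_\beta$ is parametrized by $(X,\zeta)\in T_{0,1}\times T^*_{1,0}$ via the element $X + \sigma\zeta + \zeta + \beta(X+\sigma\zeta)$. Using the complementary splitting $\Tc\oplus\cT = L_\sigma \oplus N$, each such element decomposes uniquely as $u + n$ with $u \in L_\sigma$, $n \in N$. Separating $\beta X$ and $\beta\sigma\zeta$ into their $T^*_{1,0}$ and $T^*_{0,1}$ parts yields $u = X + \sigma\zeta' + \zeta'$ with $\zeta' = \zeta + \beta^{1,1}X + \beta^{2,0}\sigma\zeta$, and $n$ equal to the remainder. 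Matching this against \eqref{epslsigma} with the same $(X,\zeta')$ then reduces the theorem to two algebraic identities---one for the $T_{1,0}$ part and one for the $T^*_{0,1}$ part of $n$---relating $\rho,\phi,\gamma$ to the Dolbeault components of $\beta$ and $\omega$.

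Both identities follow from the Dolbeault decomposition of the formality relation $(1+\beta\sigma)\omega = \beta$ into its four component equations (the $T_{1,0}\to T^*_{1,0}$, $T_{1,0}\to T^*_{0,1}$, $T_{0,1}\to T^*_{1,0}$, and $T_{0,1}\to T^*_{0,1}$ parts), together with the symmetry $\omega\sigma\beta = \beta\sigma\omega$, which is immediate since both sides equal $(1+\beta\sigma)^{-1}\beta\sigma\beta$. The main obstacle is the bookkeeping of Dolbeault types and the sign conventions implicit in the transpose $\phi^*$ appearing in \eqref{epslsigma}; one unpacks this using the skew-symmetry of $\sigma$ and $\omega^{1,1}$ to obtain $\phi^*\zeta = -\omega^{1,1}\sigma\zeta$, after which the verification is a short sequence of substitutions from the four type-components of the formality relation.
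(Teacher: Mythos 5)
Your proposal is correct and takes essentially the same route as the paper: both write $e^\beta L_\sigma = L_\sigma + \Gamma_\beta$, split against the transverse pair $(L_\sigma, N)$, and verify the graph condition for $\pi^*\omega\pi$ using the defining relation $(1+\beta\sigma)\omega = \beta$. The paper merely sidesteps your Dolbeault bookkeeping by invoking the characterization~\eqref{betaomega} of $\Gamma_\beta$ as $\{X - \sigma(\omega X) + \omega X\}$, so that $e^\beta L_\sigma = \{\ell + \xi \ :\ \xi = \omega(\pi\ell + \sigma\xi)\}$ and the identity $\pi^*\omega\pi(\ell + \sigma\xi^{1,0} + \xi^{1,0}) = -\sigma\xi^{1,0} + \xi^{0,1}$ follows in one line, without decomposing into types (your route also works, but note that closing the $T_{1,0}$-component identity requires the invertibility of $1+\beta^{2,0}\sigma$, which is where the hypothesis on $1+\beta\sigma$ enters).
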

\begin{proof}
We must show that the deformation of $L_\sigma$ given by the Maurer-Cartan element $\pi^*(\omega)$ describes precisely the deformation $e^\beta L_\sigma$.  First note that $e^\beta L_\sigma = L_\sigma + \Gamma_\beta$. Then, using~\eqref{betaomega}, we have that 
\begin{equation}\label{betao}
e^\beta L_\sigma = \{\ell + \xi\ :\ \xi = \omega(\pi\ell + \sigma\xi)\}.
\end{equation}
We wish to show that this Dirac structure is the graph of the Maurer-Cartan element $\pi^*(\omega)$; as a map this element is the composition $\pi^*\omega\pi:L_\sigma\to N$.   Splitting $\ell + \xi$ according to the decomposition $L_{\sigma}\oplus N$, we obtain
\begin{equation}
\ell + \xi  = (\ell +\sigma\xi^{1,0} + \xi^{1,0}) + (-\sigma\xi^{1,0} + \xi^{0,1}),
\end{equation}
and using~\eqref{betao}, we verify that this lies in the graph of $\pi^*\omega\pi$:
\begin{equation}
\pi^*\omega\pi(\ell +\sigma\xi^{1,0} + \xi^{1,0}) = -\sigma\xi^{1,0} + \xi^{0,1},
\end{equation}
as required to show $e^\beta L_\sigma$ is the graph of the Maurer-Cartan element $\pi^*(\omega)$. 
\end{proof}

The main problem with the idea of composing the morphisms in~\eqref{compol} to obtain Poisson deformations is that while $\pi^*$ preserves the Hodge filtration, $\psi$ does not.  That is, $\psi(\beta)^{0,2}$ may not vanish even if $\beta^{0,2}=0$.  In order to guarantee that we may always modify $\beta$ so as to kill the $(0,2)$ component of $\psi(\beta)$, we assume that the natural map $H^{2}(M,\CC)\to H^{2}(M,\OO)$ is surjective. We now show that this is enough to produce Maurer-Cartan elements in the truncated Koszul dgLa. 

\begin{theorem}\label{defex}
Let $(M,I,\sigma)$ be a a compact holomorphic Poisson manifold for which the natural map $H^{2}(M,\CC)\to H^{2}(M,\OO)$ is surjective, and fix $\omega_1\in \Omega^{2}(M,\CC)$ such that 
\begin{equation}
\omega_1^{0,2} = 0\qquad\text{and}\qquad d\omega_1 = 0.
\end{equation}
Then there is a family $\beta(t)$ of closed complex 2-forms, holomorphic in $t$, for $t$ in a neighbourhood of $0\in\CC$, with $\beta(0)=0,$ $\del_t\beta(0) = \omega_1$, and with the property that $\omega(t) = \psi(\beta(t)) = (1+\beta(t)\sigma)^{-1}\beta(t)$ satisfies, for each $t$, the conditions 
\begin{equation}
\omega^{0,2} = 0\qquad\text{and}\qquad d\omega  + \tfrac{1}{2}[\omega,\omega]_\sigma = 0.
\end{equation}
Therefore we obtain a family of Maurer-Cartan elements
\begin{equation}\label{thepoisdef}
\eps(t) = \rho(t)+\phi(t)= \wedge^2\sigma(\omega^{2,0}(t)) - \sigma(\omega^{1,1}(t)),
\end{equation} 
defining a deformation of the holomorphic Poisson structure. 
\end{theorem}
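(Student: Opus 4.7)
The idea is to build $\beta(t) = \sum_{k \geq 1} t^k \beta_k$ inductively, keeping each coefficient $\beta_k$ $d$-closed and arranging that $\omega(t) = \psi(\beta(t)) = (1+\beta(t)\sigma)^{-1}\beta(t)$ has vanishing $(0,2)$-part for every $t$. By Lemma~\ref{siform}, closedness of $\beta(t)$ automatically delivers the Koszul Maurer--Cartan equation $d\omega + \tfrac{1}{2}[\omega,\omega]_\sigma = 0$; so the whole game is to kill $\omega^{0,2}$ order by order. Once this is achieved, formula~\eqref{thepoisdef} yields the desired family $\eps(t)$ of Maurer--Cartan elements in the Poisson deformation complex, via Theorem~\ref{composb}.

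\textbf{Inductive step and obstruction.} Set $\beta_1 := \omega_1$, which is closed with $\beta_1^{0,2}=0$, giving $\omega_1^{0,2}=0$. Assume closed $\beta_1,\ldots,\beta_{k-1}$ have been chosen with $\omega_j^{0,2}=0$ for $j<k$. The truncation $\beta(t) = \sum_{j<k} t^j \beta_j$ is closed, so Lemma~\ref{siform} gives the Maurer--Cartan equation for its $\omega(t)=\psi(\beta(t))$; the $(0,3)$-component at order $t^k$ reads
\[
\delbar \omega_k^{0,2} + \tfrac{1}{2} \sum_{i+j=k} [\omega_i, \omega_j]_\sigma^{0,3} = 0.
\]
The Koszul bracket shifts bidegree by $(-1,0)$, and $\omega$ has no $(3,0)$ or $(0,3)$ component, so each term in the bracket sum reduces to $[\omega_i^{1,1},\omega_j^{0,2}]_\sigma$ (modulo symmetry) with $i+j=k$, and these vanish either because $\omega_j^{0,2}=0$ for $j<k$ or because $\omega_0=0$. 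Writing $\omega_k =: R_k$ for this temporary expansion, I conclude $\delbar R_k^{0,2}=0$.

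\textbf{Using the Hodge-theoretic hypothesis.} The surjectivity of $H^2(M,\CC) \to H^2(M,\OO) \cong H^{0,2}_{\delbar}$ then realizes the class $[-R_k^{0,2}]$ as the $(0,2)$-part of a $d$-closed complex 2-form: there exist $\alpha \in \Omega^2_\CC$ with $d\alpha=0$ and $\mu \in \Omega^{0,1}$ such that $\alpha^{0,2} = -R_k^{0,2} + \delbar\mu$. Setting $\beta_k := \alpha - d\mu$ gives a closed 2-form with $\beta_k^{0,2}=-R_k^{0,2}$. A direct expansion of $(1+\beta\sigma)^{-1}\beta$ shows that adding $t^k\beta_k$ to $\beta(t)$ alters $\omega_k$ by exactly $\beta_k$ (higher order terms are $O(t^{k+1})$ because $\beta(t)=O(t)$), so the new $\omega_k^{0,2}$ is zero, completing the induction. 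Since each choice is $\CC$-linear in the free data, the result is formally holomorphic in $t$.

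\textbf{Convergence, the main obstacle.} The genuine technical difficulty is to upgrade this formal power series to an actual holomorphic family on a disc around $0\in\CC$. Compactness of $M$ and Hodge theory let me make canonical choices of $\alpha$ and $\mu$ with Sobolev norms controlled by $\|R_k^{0,2}\|$ (by inverting the relevant elliptic operators via Green's operators), after which a Kuranishi-type majorant argument on the recursive expression for $R_k$ arising from $(1+\beta\sigma)^{-1}\beta$ yields estimates of the shape $\|\beta_k\| \leq A\,C^k$ in a fixed Sobolev norm. These imply convergence of $\beta(t)$ on a small disc and, through~\eqref{thepoisdef}, produce the advertised holomorphic family $\eps(t)$ of Poisson deformations.
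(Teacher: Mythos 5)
Your proposal is correct and follows essentially the same route as the paper's proof: the same order-by-order construction of closed $\beta_k$ killing $\psi(\beta)^{0,2}$, the same use of the Maurer--Cartan equation to see that the remainder $R_k^{0,2}$ is $\delbar$-closed, the same use of surjectivity of $H^2(M,\CC)\to H^2(M,\OO)$ to cancel it by a closed form, and the same appeal to elliptic estimates for convergence. The only difference is presentational: you spell out the bidegree bookkeeping for the Koszul bracket behind $\delbar R_k^{0,2}=0$, while the paper additionally fixes a canonical harmonic choice of $\beta_{k+1}$ (via a splitting $\Hh\subset H^2(M,\CC)$), which it later exploits to control the cohomology class $[\beta(t)]$.
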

\begin{proof}
We construct a time-dependent family $\beta = t\beta_1 + t^2 \beta_2 + \cdots$ of closed 2-forms such that $\psi(\beta)^{0,2}=0$.  The fact that $\pi^*(\psi(\beta))$ is a Maurer-Cartan element has already been explained: \eqref{dglahom} is a dgLa homomorphism. We use $\beta_{\leq k}$ to denote the truncation $t\beta_1 + \cdots + t^k \beta_k$ of $\beta$ to order $k$.

We begin by setting $\beta_1 = \omega_1$, so that $\psi(\beta)^{0,2} = (t \omega_1)^{0,2} = 0 \pmod{t^2}$.
Now assume by induction that $\beta_1,\ldots, \beta_k$ are closed 2-forms chosen such that $\psi(\beta)^{0,2} = 0 \pmod{t^{k+1}}$. We show how $\beta_{k+1}$ can be chosen so that $\psi(\beta)^{0,2}$ vanishes to the next order. 

Since $\psi(\beta)^{0,2} = 0 \pmod{t^{k+1}}$, we have that 
\begin{equation}
\psi(\beta_{\leq k})^{0,2} = t^{k+1} r_{k+1} \pmod{t^{k+2}},
\end{equation}
for $r_{k+1}\in\Omega^{0,2}$. But since $\beta_{\leq k}$ is closed, $\psi(\beta_{\leq k})$ satisfies the Maurer-Cartan equation, which implies that $\delbar r_{k+1} = 0$, defining a class in $H^2(M,\OO)$.
Finally, we have 
\begin{equation}
\psi(\beta)^{0,2} = t^{k+1}(r_{k+1} + \beta_{k+1}^{0,2}) \pmod{t^{k+2}},
\end{equation}
so that the surjectivity of $H^{2}(M,\CC)\to H^{2}(M,\OO)$ allows the choice of a closed form $\beta_{k+1}$ such that $\psi(\beta)^{0,2} = 0 \pmod{t^{k+2}}$. We make this choice as follows: choose a Hermitian metric as well as a splitting $\Hh\subset H^2(M,\CC)$ for the projection to $H^2(M,\OO)$, and let $\wt r_{k+1}\in \Omega^2(M,\CC)$ be the unique harmonic lift of $r_{k+1}$ such that $[\wt r_{k+1}]\in\Hh$.  Then 
\begin{equation}
\wt r_{k+1}^{0,2} - r_{k+1} = \delbar \gamma_{k+1}
\end{equation} 
for a unique $\gamma_{k+1}\in\Omega^{0,1}$ in the image of $\overline{\partial}^*$.  Then define $\beta_{k+1} = -\wt r_{k+1} + d\gamma_{k+1}$.  
Finally, standard elliptic estimates may be used to conclude, as in~\cite{MR2214741, MR2669364}, that the series $\beta(t)=\sum_kt^k\beta_k$ converges.
\end{proof}

Differentiating~\eqref{thepoisdef} at $t=0$, we obtain a map of cohomology groups for the $p\geq 1$ truncations of the complexes appearing in~\eqref{dglahom}.  In terms of hypercohomology, we have the homomorphism
\begin{equation}\label{unob}
\xymatrix@R=1em{\HH^1(\Omega^{\geq 1}[1])\ar[r]^{\sigma}&  \HH^1(\X^{\geq 1}[1]) \\
[\omega_1]\ar@{|->}[r] 
\ar@<0.7ex>@{}[u]|-*=0[@]{\in}
& [\wedge^2\sigma(\omega_1^{2,0}) - \sigma(\omega_1^{1,1})]\ar@<0.7ex>@{}[u]|-*=0[@]{\in}
}
\end{equation}
In the above diagram, ${\Omega}^{\geq 1}[1]$ denotes the truncated holomorphic de Rham complex:
\begin{equation}
{\Omega}^{\geq 1}[1] =
\xymatrix{ {\Omega}^{1}\ar[r]^-{d}& {\Omega}^{2}\ar[r] &\cdots},
\qquad \Omega^k=\OO(\wedge^{k} T^*_{1,0}).
\end{equation}
Projecting each complex to its first position, we obtain the natural induced map
\begin{equation}
\begin{aligned}\label{unobpr}
\xymatrix@R=1em{H^1(\Omega^{1})\ar[r]^{\sigma}&  H^1(T) \\
[\omega_1^{1,1}]\ar@{|->}[r] 
\ar@<0.7ex>@{}[u]|-*=0[@]{\in}
& [-\sigma(\omega_1^{1,1})]\ar@<0.7ex>@{}[u]|-*=0[@]{\in}
}
\end{aligned}
\end{equation}
the unobstructedness of whose image was the focus of~\cite{MR3024823}:
\begin{corollary}
Any first-order deformation of the compact holomorphic Poisson manifold $(M,I,\sigma)$ in the image of the map~\eqref{unob} is unobstructed.  In particular, any first-order deformation of the underlying complex structure in the image of~\eqref{unobpr} is unobstructed.
\end{corollary}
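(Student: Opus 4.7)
The plan is to deduce the corollary directly from Theorem~\ref{defex} by verifying that the formal (in fact convergent) Poisson deformation it constructs has the prescribed first-order part, so that any obstruction to extending the infinitesimal deformation must vanish. Concretely, given a class $c\in\HH^1(\Omega^{\geq 1}[1])$, I would first choose a representative: such a class is represented by a pair $(\eta,\tau)$ with $\eta\in\Omega^1(T^*_{1,0})$ and $\tau\in H^0(\Omega^2)$ satisfying $\delbar\tau + \del\eta=0$, and one can then find a global closed $\omega_1\in\Omega^2_\CC$ with $\omega_1^{0,2}=0$ whose $(1,1)$ and $(2,0)$ parts represent $\eta$ and $\tau$. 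The ``in particular'' case corresponds to the choice $\tau=0$, i.e.\ $\omega_1\in\Omega^{1,1}_\CC$.

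Next, I would feed this $\omega_1$ into Theorem~\ref{defex} to obtain a holomorphic family $\beta(t)$ of closed complex 2-forms with $\beta(0)=0$ and $\del_t\beta(0)=\omega_1$, such that $\omega(t)=\psi(\beta(t))=(1+\beta(t)\sigma)^{-1}\beta(t)$ has vanishing $(0,2)$ part and solves the Maurer-Cartan equation~\eqref{mckos}. The resulting
\begin{equation}
\eps(t) = \wedge^2\sigma(\omega^{2,0}(t)) - \sigma(\omega^{1,1}(t))
\end{equation}
is then, by construction, a Maurer-Cartan element of the truncated Poisson dgLa $(\Omega^{0,\bullet}(\wedge^{\geq 1}T_{1,0}), d_\sigma, [\cdot,\cdot])$ for every $t$ in a neighbourhood of $0$. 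This is the content of Theorem~\ref{defex} combined with the fact that $\oplus_p\wedge^p\pi^*$ in~\eqref{dglahom} is a dgLa homomorphism.

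To conclude, I would differentiate at $t=0$. Since $\psi(\beta)=\beta-\beta\sigma\beta+\cdots$ starts with the identity, $\omega(t)=t\omega_1+O(t^2)$, and hence
\begin{equation}
\tfrac{d}{dt}\eps(t)\big\rvert_{t=0} = \wedge^2\sigma(\omega_1^{2,0}) - \sigma(\omega_1^{1,1}),
\end{equation}
which is exactly the image of $c$ under the map~\eqref{unob}. Thus the first-order deformation $[\wedge^2\sigma(\omega_1^{2,0}) - \sigma(\omega_1^{1,1})]$ extends to a genuine (convergent) deformation, i.e.\ is unobstructed. Specializing to $\omega_1\in\Omega^{1,1}_\CC$ gives $\omega_1^{2,0}=0$, so the Poisson component $\rho$ vanishes to first order and the image in $H^1(T)$ under~\eqref{unobpr} is $[-\sigma(\omega_1^{1,1})]$, proving the second assertion.

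The step I expect to require the most care is the first one: verifying that every hypercohomology class in $\HH^1(\Omega^{\geq 1}[1])$ admits a global closed representative $\omega_1\in\Omega^{\geq 1}_\CC$ with $\omega_1^{0,2}=0$, so that the hypothesis of Theorem~\ref{defex} is satisfied on the nose. Once a \v{C}ech-to-de Rham style argument produces such a representative, the remainder of the proof is a matter of unwinding the construction and reading off its linearization, using that $\psi$ is the identity to first order and that $\pi^*$ is literally linear.
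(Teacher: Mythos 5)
Your proposal is correct and follows essentially the same route as the paper, which states the corollary as an immediate consequence of Theorem~\ref{defex}: the theorem produces a genuine family whose derivative at $t=0$ is exactly the image of $[\omega_1]$ under~\eqref{unob}. The representability step you flag is automatic: computing $\HH^1(\Omega^{\geq 1}[1])$ via the Dolbeault double complex, a degree-one cocycle is precisely a pair $\omega_1^{1,1}+\omega_1^{2,0}$ whose sum is a globally defined closed complex 2-form with vanishing $(0,2)$ part, so the hypothesis of Theorem~\ref{defex} holds on the nose.
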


The proof of Theorem~\ref{defex} gives some information about the behaviour of the cohomology class $[\beta(t)]$ in the construction of the deformation.  As done in the proof, if we choose a splitting $\Hh\subset H^2(M,\CC)$ of the map $\pi:H^2(M,\CC)\to H^2(M,\OO)$, then $\beta(t)$ can be chosen so that $[\beta(t) - t\omega_1]\in \Hh$. In particular, if $H^2(M,\OO)=\{0\}$, we obtain the following generalization of~\cite[Proposition 7]{MR3024823} (where the result was obtained for a generically symplectic Poisson structure).
\begin{corollary}
If $H^2(M,\OO)=\{0\}$, 
 then in Theorem~\ref{defex}, we may choose $\beta(t)$ such that $[\beta(t)] = t[\omega_1]$ in $H^2(M,\CC)$, producing a linear dependence on $t$ for the periods of all of the holomorphic symplectic leaves of the resulting family of holomorphic Poisson structures.
\end{corollary}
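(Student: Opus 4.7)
The plan is to revisit the inductive construction of $\beta(t)$ in the proof of Theorem~\ref{defex}, exploit the vanishing $H^2(M,\OO)=0$ to control the cohomology class $[\beta(t)]$, and then translate this control into the period statement.

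First I would observe that when $H^2(M,\OO)=0$, the projection $H^2(M,\CC)\to H^2(M,\OO)$ is identically zero and therefore admits the unique splitting $\Hh=\{0\}$. Applied to the inductive step of Theorem~\ref{defex}, each obstruction $[r_{k+1}]\in H^2(M,\OO)$ is automatically trivial, so the unique harmonic lift with class in $\Hh$ is $\wt r_{k+1}=0$ and the increment becomes $\beta_{k+1}=d\gamma_{k+1}$, exact. Hence $[\beta_{k+1}]=0$ in $H^2(M,\CC)$ for every $k\geq 1$, and summing the series yields $[\beta(t)]=t[\omega_1]$.

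Next I would translate this into the period statement. By Theorem~\ref{composb}, the constructed deformation is precisely the gauge transformation $L_{\sigma(t)}=e^{\beta(t)}L_\sigma$, and, as in the analysis of gauge equivalence in Section~\ref{gaugepo}, this preserves the symplectic foliation as a real foliation while shifting each leafwise holomorphic symplectic form by the pullback $\iota^*\beta(t)$, where $\iota$ denotes the leaf inclusion. Consequently, writing $\omega_L(t)$ for the symplectic form on a holomorphic symplectic leaf $L$ at time $t$, and fixing any $2$-cycle $c\subset L$, we have
\begin{equation}
\int_c \omega_L(t) = \int_c \omega_L(0) + t\int_c \iota^*\omega_1,
\end{equation}
which is linear in $t$, as required.

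The main subtlety lies in making precise the sense in which the holomorphic symplectic leaves of $\sigma(t)$ can be identified with those of $\sigma$, given that the ambient complex structure is also deforming. This is addressed by the fact that the Dirac-theoretic identity $e^{\beta(t)}L_\sigma=L_{\sigma(t)}$ preserves the image of the Poisson anchor, so the underlying real foliation is fixed; on each common leaf, the two Poisson bivectors are nondegenerate and their inverses differ by $\iota^*\beta(t)$, so the de~Rham class of the leafwise symplectic form is $[\omega_L(0)]+t[\iota^*\omega_1]$, giving the linear dependence on $t$ of all periods.
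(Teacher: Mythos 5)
Your proposal is correct and follows essentially the same route as the paper: the remark preceding the corollary observes that $[\beta(t)-t\omega_1]$ lies in the chosen splitting $\Hh$, which is forced to be $\{0\}$ when $H^2(M,\OO)=0$, so every increment $\beta_{k+1}$ ($k\geq 1$) is exact and $[\beta(t)]=t[\omega_1]$. Your translation into periods via the leafwise shift of the holomorphic symplectic forms by $\iota^*\beta(t)$ is exactly the interpretation of gauge equivalence given in Section~\ref{gaugepo}, so nothing further is needed.
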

We now give an example where the periods of the deformation do not vary linearly but instead quadratically.  
\begin{example}[Twistor family of a hyperkahler manifold]
Let $(M, g, I_1, I_2, I_3)$ be a hyperkahler manifold, with triple of K\"ahler forms $(\omega_1,\omega_2,\omega_3)$.  Consider the holomorphic Poisson structure $(I_1, \sigma_1)$ inverse to the holomorphic symplectic form $\Omega_1 = \omega_2 + i \omega_3$:
\begin{equation}
\sigma_1 = \tfrac{1}{4}(\omega_2^{-1} - i \omega_3^{-1}).
\end{equation}
We may then apply Theorem~\ref{defex} to the $(1,1)$ form $2i\omega_1$, obtaining an unobstructed deformation in the direction $2i\sigma_1(\omega_1)$.  In the construction of the closed form $\beta = t\beta_1 + t^2\beta_2 + \cdots$ something very special occurs: the first remainder term is actually closed:
\begin{equation}
r_2 = (2i)^2\omega_1 \sigma_1\omega_1 = -\omega_2 + i\omega_3 = -\ol{\Omega_1}.
\end{equation}
As a result, we may simply take $\beta_2 = \ol{\Omega_1}$.  Then because $\sigma_1\omega_1\sigma_1=0$ and $\sigma_1\beta_2 = \beta_2\sigma_1=0$, there are no higher terms in $\psi(\beta)^{0,2}$ to cancel. So we have a solution which terminates at the quadratic term:
\begin{equation}
\beta(t) = 2it\omega_1 + t^2\ol{\Omega_1}.
\end{equation}
This family is very familiar: we may describe it in terms of the inverse family of holomorphic symplectic structures:
\begin{equation}
\Omega(t) = \Omega_1 + 2i t \omega_1 + t^2\ol\Omega_1,
\end{equation}
which is the standard twistor family associated to the hyperkahler manifold.
\end{example}

We shall study the properties of the Poisson deformation given by Theorem~\ref{defex} in the next section, and then apply it to the construction of generalized K\"ahler structures in Section~\ref{applic}.  To conclude this section, we use the techniques of~\cite{Gualtieri:2017aa} to describe explicitly the new holomorphic Poisson structure obtained by applying the deformation $\eps(t) = \rho(t)+\phi(t)$ given in~\eqref{thepoisdef}.

Recall that the dgLa~\eqref{dgla} controlling Poisson deformations derives from the pair of transverse Dirac structures $(L_\sigma, N)$ defined in~\eqref{lsig}, \eqref{ndir}.  The Maurer-Cartan element $\eps = \rho + \phi$ deforms $L_\sigma$ to 
\begin{equation}
L^\eps_\sigma =\{X + \phi X  +  \sigma\zeta + \rho\zeta + \zeta - \phi^{*}\zeta  \ :\ X\in T_{0,1},\ \zeta\in T^{*}_{1,0}\}.
\end{equation}

Since $L_{\sigma}^{\eps}$ is still transverse to $N$, it induces a modification of the differential in~\eqref{dgla}, namely 
\begin{equation}
d_{\sigma}^{\eps} = (\delbar + [\phi,\cdot ]) + (\del_{\sigma} + [\rho,\cdot]),
\end{equation}
yielding the usual interpretation of the Maurer-Cartan equation  as the condition $(d^{\eps}_{\sigma})^{2}= 0$ for a new differential graded Lie algebra:
\begin{equation}\label{dgladef}
\left(\bigoplus_{p,q}\Omega^{0,q}(\wedge^{p}T_{1,0}),\ d^{\eps}_{\sigma}, [\cdot,\cdot]\right).
\end{equation}

The first equation~\eqref{defcx} is familiar from the deformation theory of complex structures, and holds if and only if the complex distribution
\begin{equation}\label{newcx}
T^{\phi}_{0,1} = \{X + \phi X\ :\ X\in T_{0,1}\}
\end{equation}
is involutive.
As long as $(1-\phi\ol\phi)$ is invertible, this defines the $-i$-eigenspace of a new complex structure (and we use $T^{\phi}_{1,0}$ to denote its $+i$-eigenspace).  Rewriting $L^{\eps}_{\sigma}$ in terms of the new complex structure, we obtain 
\begin{equation}
L_{\sigma}^{\eps}= \{X +   \wt\sigma\xi + \xi  \ :\ X\in T^{\phi}_{0,1},\ \xi\in (T^{\phi}_{1,0})^{*}\},
\end{equation}
where the deformed Poisson tensor $\wt \sigma$ is given by 
\begin{equation}\label{newpoi}
\wt \sigma = \wedge^{2}P^{\phi}_{1,0} (\sigma+\rho),
\end{equation}
where $P^{\phi}_{1,0}$ is the projection operator on $T_{\CC}$ with kernel $T^{\phi}_{0,1}$ and image $T^{\phi}_{1,0}$.  This gives an explicit description of the deformed Poisson tensor as the $(2,0)$ component of $\sigma +\rho$ relative to the new complex structure. 

Just as $N$ is transverse to $L_{\sigma}$, so too we have that $N_{\phi}$, defined below, is transverse to $L^{\eps}_{\sigma}$:
\begin{equation}
N_{\phi} = T^{\phi}_{1,0}\oplus (T^{\phi}_{0,1})^{*}.
\end{equation}
Therefore, using the new complex structure and Poisson bivector, we obtain, just as in~\eqref{dgla}, a differential graded Lie algebra structure on sections of $\wedge^{\bullet} N_{\phi}$: 
\begin{equation}\label{dgla2}
\left(\bigoplus_{p,q}\Omega^{0,q}_{\phi}(\wedge^{p}T_{1,0}^{\phi}),\ d_{\wt\sigma}=\delbar_{\phi} + \del_{\wt\sigma},\ [\cdot,\cdot ]\right),
\end{equation}
where $\delbar_{\phi}$ is the Dolbeault operator in the new complex structure.

There are two key observations about the relationship between the dgLas~\eqref{dgladef} and~\eqref{dgla2}. The first is that, since both $N$ and $N_{\phi}$ are transverse to (hence dual to) $L^{\eps}_{\sigma}$, they are naturally isomorphic via the map $\Psi:T_{1,0}\oplus T^{*}_{0,1}\to T^{\phi}_{1,0}\oplus (T^{\phi}_{0,1})^{*}$, given by 
\begin{equation}\label{upt}
\Psi = \begin{pmatrix}
P^{\phi}_{1,0} & P_{1,0}^{\phi}(\sigma + \rho) (P_{0,1}^{\phi})^{*}\\
0 & (P_{0,1}^{\phi})^{*}
\end{pmatrix}
\end{equation}
As a result, $\Psi$ induces an isomorphism of cochain complexes from~\eqref{dgladef} to~\eqref{dgla2}.  The second is that this isomorphism need not be bracket-preserving; indeed, in~\cite{Gualtieri:2017aa} it is explained that $\Psi$ is the first in a sequence of maps defining an $L_\infty$ isomorphism between the two dgLas.  

Furthermore, while $\Psi$ does not respect the bigrading of each dgLa, it does, due to the upper-triangular expression~\eqref{upt}, respect the decreasing filtration in the $p$ index analogous to the Hodge filtration.  
This allows us to reason about the geometry of the  deformed holomorphic Poisson structure using the modified differential on the original Dolbeault complex~\eqref{dgladef}.  For example, a function $f$ is holomorphic in the new complex structure, i.e. $\delbar_{\phi}f=0$, if and only if $\delbar f + [\phi,f] = 0$.  Also,
if $Z\in \Omega^{0,0}(T_{1,0})$ satisfies $\delbar Z + [\phi,Z]=0$, then $Z_{\phi}=P^{\phi}_{1,0}(Z)$ is holomorphic in the new complex structure, and if, in addition, $[\sigma + \rho,Z]=0$, so that $d^{\eps}_{\sigma}Z = 0$,  then $Z_{\phi}$ is a Poisson vector field for $\wt\sigma$.


\section{Hamiltonian families and Morita equivalence}\label{sevfam}

In this section, we explain that the deformations of holomorphic Poisson structure constructed in the previous section may be characterized as \emph{Hamiltonian families}, introduced by \v{S}evera~\cite{Severa:aa,MR1978549} in the real Poisson category.  We also show how this implies that the deformations of Poisson structure remain in the same Morita equivalence class. As this section is not needed for our main results in Section~\ref{applic}, we will omit details, which we hope will appear elsewhere.

\subsection{Real Hamiltonian families and Dirac structures}\label{realdirac}
Let $\pi_0$ be a real Poisson structure on $M$, and let $B_t\in\Omega^2(M,\RR), t\in\RR$ be a family of real closed 2-forms with $B(0)=0$. Applying a gauge transformation to $\pi_0$ gives rise to the following family of Poisson structures for $t$ sufficiently small in $\RR$:
\begin{equation}
\pi_t = \pi_0(1 + B_t\pi_0)^{-1}.
\end{equation}
This family satisfies the differential equation
\begin{equation}\label{hamr}
\dot \pi_t = -\pi_t \dot B_t \pi_t = -\wedge^{\!2}\!\pi_t (\dot B_t),
\end{equation}
expressing the fact that $\pi_t$ has velocity given by the infinitesimal gauge transformation by $\dot B_t$.  This is a special case of what \v{S}evera called a \emph{Hamiltonian} family of Poisson structures~\cite{Severa:aa,MR1978549}.  Notice the analogy: if $A$ is a closed 1-form, then $\pi_t(A)$ is (locally) an infinitesimal Hamiltonian symmetry, whereas for $B$ a closed 2-form $\wedge^2\pi_t(B)$ is a Hamiltonian deformation.  As \v{S}evera remarked, the differential equation~\eqref{hamr} may be expressed as the integrability condition of an interesting Dirac structure $D\subset (T\oplus T^*)(M\times \RR)$ on the total space of the family, which we may describe as follows:
\begin{equation}
D|_{M\times\{t\}} = \{ f\del_t +  \pi_t\xi + \xi\ :\ f\in\Omega^0(M), \xi\in \Omega^1(M)\}
\end{equation}
\begin{proposition}\label{realhamasdir}
The Dirac structure $D$ defined above is involutive with respect to the 3-form $H = dt\wedge \dot B_t\in\Omega^3(M\times\RR)$ if and only if $\pi_t$ is a family of Poisson structures satisfying the differential equation~\eqref{hamr} for a closed 2-form $B$.  
\end{proposition}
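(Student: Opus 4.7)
The plan is to prove the equivalence by direct computation of Courant brackets on a spanning set of local sections of $D$. First one checks that $D$ is maximal isotropic of rank $\dim M + 1 = \dim(M\times\RR)$: the pairing of $f\partial_t + \pi_t\xi + \xi$ with itself reduces to $2\xi(\pi_t\xi)=0$ by skew-symmetry of $\pi_t$, and rank is manifest since $f$ and $\xi$ are independent data. One then observes that $D$ is locally spanned (over $C^\infty(M\times\RR)$) by the vertical vector field $\partial_t$ together with the sections $\pi_t\xi+\xi$ for $\xi\in\Omega^1(M)$ a time-independent $1$-form pulled back to $M\times\RR$; by the Leibniz-type properties of the Courant bracket, it suffices to verify involutivity on these two types of generators.

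The heart of the proof is the computation of the two bracket types. For $[\partial_t,\pi_t\xi+\xi]_H$, the vector part is $[\partial_t,\pi_t\xi]=\dot\pi_t\xi$, the intrinsic $1$-form part $\mathcal{L}_{\partial_t}\xi$ vanishes, and the twist contributes $i_{\pi_t\xi}i_{\partial_t}H=i_{\pi_t\xi}\dot B_t$; for the sum $\dot\pi_t\xi+i_{\pi_t\xi}\dot B_t$ to lie in $D|_{M\times\{t\}}$ we need the vector part to equal $\pi_t$ applied to the $1$-form part, which is precisely the differential equation $\dot\pi_t=-\pi_t\dot B_t\pi_t$. For $[\pi_t\xi_1+\xi_1,\pi_t\xi_2+\xi_2]_H$ with $\xi_i$ time-independent, the bracket splits into an $M$-valued piece, which reduces to the classical Dirac-Poisson calculation on $(T\oplus T^*)(M)$ and lies in $\Gamma_{\pi_t}$ exactly when $\pi_t$ satisfies the Jacobi identity, and a $dt$-valued coefficient coming from $\partial_t\bigl(\xi_2(\pi_t\xi_1)\bigr)\,dt$ together with $i_{\pi_t\xi_2}i_{\pi_t\xi_1}H = \dot B_t(\pi_t\xi_1,\pi_t\xi_2)\,dt$; setting this $dt$-coefficient to zero for all $\xi_i$ again gives the Hamiltonian equation, providing a consistency check.

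Combining both computations yields the equivalence of involutivity of $D$ with the pair of conditions that each $\pi_t$ is Poisson and the family satisfies $\dot\pi_t=-\pi_t\dot B_t\pi_t$. The main bookkeeping challenge is keeping the $dt$-components separate from the $M$-components in the twist term and in the time derivatives of $t$-dependent sections. A conceptually cleaner alternative, giving the ``sufficient'' direction, is to note that the $2$-form $\mathbf{B}\in\Omega^2(M\times\RR)$ defined by $\mathbf{B}|_{M\times\{t\}}=B_t$ with no $dt$-component satisfies $d\mathbf{B}=dt\wedge\dot B_t=H$; for $\pi_t=\pi_0(1+B_t\pi_0)^{-1}$ the slice-wise identity $e^{-B_t}\Gamma_{\pi_t}=\Gamma_{\pi_0}$ of Section~\ref{gaugepo} shows that $e^{-\mathbf{B}}D$ is the product Dirac structure $\Gamma_{\pi_0}\oplus\mathrm{span}(\partial_t)$, which is manifestly involutive for the zero $3$-form precisely when $\pi_0$ is Poisson; the gauge transformation then transfers this to involutivity of $D$ for the $H$-twist.
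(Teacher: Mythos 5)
Your proof is correct and follows essentially the same route as the paper: a direct computation of the $H$-twisted Courant bracket on generators of $D$, with $[\partial_t,\pi_t\xi+\xi]_H=\dot\pi_t\xi+i_{\pi_t\xi}\dot B_t$ (for time-independent $\xi$) lying in $D$ precisely when $\dot\pi_t=-\pi_t\dot B_t\pi_t$. You are in fact slightly more careful than the paper at one point: the paper dismisses the brackets of two sections $\pi_t\xi_i+\xi_i$ with the remark that involutivity follows from $\pi_t$ being Poisson, whereas, as you observe, these brackets also acquire a $dt$-component $\bigl(\partial_t(\xi_2(\pi_t\xi_1))+\dot B_t(\pi_t\xi_1,\pi_t\xi_2)\bigr)\,dt$ which must vanish because $D$ contains no $dt$-covectors; since its vanishing is again equivalent to the Hamiltonian equation, the statement is unaffected, but your ``consistency check'' is genuinely part of the verification. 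Your closing alternative --- that for the integrated family $\pi_t=\pi_0(1+B_t\pi_0)^{-1}$ one has $D=e^{\mathbf{B}}\bigl(\Gamma_{\pi_0}\times T\RR\bigr)$ with $d\mathbf{B}=dt\wedge\dot B_t=H$, so that sufficiency follows from gauge invariance of the Courant bracket --- is a clean complement, in the spirit of the Morita-equivalence discussion that follows the proposition in Section~\ref{realdirac}.
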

\begin{proof}
Involutivity for sections of the form $\pi_t\xi + \xi$ holds since $\pi_t$ is a Poisson structure. The only remaining condition is the Courant bracket 
\begin{equation}
\begin{aligned}
{[}\del_t, \pi_t\xi + \xi{]}_H &= \dot\xi + \pi_t\dot\xi + \dot\pi_t\xi + i_{\pi_t\xi}i_{\del_t} (dt\wedge\dot B_t)\\
&= \dot\xi + \pi_t\dot\xi + \dot\pi_t\xi + i_{\pi_t\xi}\dot B_t,
\end{aligned}
\end{equation}
which lies in $D$ for all $\xi$ if and only if $\pi_t\dot B_t\pi_t= - \dot\pi_t$, as required. 
\end{proof}

The Dirac structure $D$ in the construction above is characterized by the fact that it is complementary to the Dirac structure defined by the projection $p:M\times\RR\to\RR$ defining the family itself, namely 
\begin{equation}\label{famdi}
F = \ker{p_*} \oplus\ann{\ker{p_*}} = TM\oplus T^*\RR.
\end{equation}
Since $D$ and $F$ are complementary, we may take the dirac sum $D-F$, and this recovers the Poisson structure on $M\times\RR$ defined by the family $\pi_t$:
\begin{equation}
D - F = \Gamma_{\pi_t}.
\end{equation}
We also see from this that $D$ fits into the following exact sequence:
\begin{equation}
\xymatrix{0\ar[r] & \Gamma_{\pi_t}\ar[r] & D\ar[r]^-{p_*}& p^* T\RR\ar[r] & 0}
\end{equation}
Such an exact sequence of Lie algebroids leads to a similar short exact sequence of the corresponding Lie groupoids\footnote{We do not address issues of integrability and source-connectedness here.}
\begin{equation}
\xymatrix{0\ar[r] & G_{\pi_t}\ar[r] & G_D\ar[r]^-{q}& \Pi_1(\RR)\ar[r] & 0},
\end{equation}
with the important consequence that if $[\gamma]\in\Pi_1(\RR)$ is a path from $0$ to $t$, then $Q_\gamma = q^{-1}([\gamma])$ defines a Morita equivalence between the Lie groupoids $G_{\pi_0}$ and $G_{\pi_t}$.  Furthermore, as shown in~\cite{MR2068969}, the fact that $D$ is a Dirac structure means that its Lie groupoid $G_D$ is endowed with a distinguished 2-form defining a multiplicative presymplectic structure.  The restriction of this 2-form to $Q_\gamma$ is then a symplectic structure making $Q_\gamma$ into a symplectic Morita bimodule, in the sense of~\cite{MR1104935}, between the Poisson manifolds $(M,\pi_0)$ and $(M,\pi_t)$.    In this way, the Dirac structure $D$ gives an alternative approach to the result, originally observed in~\cite{MR1973074}, that gauge-equivalent Poisson structures are Morita equivalent.

\subsection{Complex Hamiltonian families and Dirac structures}\label{cxhamfam}

The result of Theorems~\ref{composb} and~\ref{defex} was to produce a holomorphic family $\beta = \{\beta_t, t\in\CC\}$ of closed complex 2-forms, with $\beta_0=0$ and $\del_t\beta(0)=\omega_1$, such that its action on the holomorphic Poisson structure $\sigma_0$ by gauge transformation produces a family $\sigma_t$ of holomorphic Poisson structures.  In terms of the corresponding Dirac structures we have $e^{\beta_t}L_{\sigma_0} = L_{\sigma_t}$.  By assumption, $\del_t\beta(0) = \omega_1$ is of type $(2,0)+(1,1)$ relative to the initial complex structure $I_0$.  We may however repeat this analysis at any other time $t_0$:  first note that 
\begin{equation}
L_{\sigma_t} = e^{\beta_t-\beta_{t_0}} e^{\beta_{t_0}}L_{\sigma_0} =e^{\beta_t-\beta_{t_0}}L_{\sigma_{t_0}} = e^{\wt\beta_t}L_{\sigma_{t_0}},
\end{equation}
for $\wt\beta_t = \beta_t - \beta_{t_0}$.
As a result of Theorem~\ref{composb}, this gives rise to the following Maurer-Cartan element for the Koszul bracket of $\sigma_{t_0}$:
\begin{equation}
\wt \omega_t = (1+\wt\beta_t\sigma_{t_0})^{-1}\wt\beta_t.
\end{equation}
Since the resulting deformation of $L_{\sigma_0}$ remains Poisson, $\wt\omega_t$ must remain of type $(2,0) + (1,1)$ relative to the complex structure $I_{t_0}$. As a result, we have that the closed form
\begin{equation}\label{equalvel}
\left.\del_t\wt \omega\right|_{t_0} = \left.\del_t\wt\beta\right|_{t_0} = \left.\del_t\beta\right|_{t_0}
\end{equation} 
has type $(2,0) + (1,1)$ relative to the complex structure $I_{t_0}$. In conclusion, we obtain the following ``differential'' characterization of the family of Poisson structures constructed in Theorem~\ref{defex}.  We may consider it to be a holomorphic version of the Hamiltonian family described in the previous section.
%
\begin{theorem}\label{hamfamcx}
The family $(I_t, \sigma_t)$ of holomorphic Poisson structures constructed in Theorem~\ref{defex} via the family of closed 2-forms $\beta$ has the following derivative at each time $t$:
\begin{equation}\label{cxham}
\begin{aligned}
\del_t I_t&= -2i \sigma_t(\alpha^{1,1}),\\
\del_t \sigma_t &=\wedge^2\sigma_t(\alpha^{2,0}),\\
\delbar_t \sigma_t &= \ol\sigma_t\ol{\alpha^{1,1}}\sigma_t+\sigma_t\ol{\alpha^{1,1}}\ol\sigma_t.
\end{aligned}
\end{equation}
where $\alpha=\del_t\beta$ and the projections to the $(2,0)$ and $(1,1)$ components are taken with respect to the complex structure $I_t$ at time $t$.
\end{theorem}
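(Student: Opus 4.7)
The plan is to reduce the three identities to an infinitesimal Maurer--Cartan calculation at an arbitrary reference time $t_0$, and then translate the result back via the explicit formulas \eqref{newcx} and \eqref{newpoi} for the deformed complex structure and Poisson tensor, being mindful that $\partial_t$ and $\bar\partial_t$ denote the Wirtinger derivatives in the complex parameter $t$. The essential reduction, already used in~\eqref{equalvel}, is that the tail $\tilde\beta(t) = \beta(t) - \beta(t_0)$ is a holomorphic family of closed 2-forms with $\tilde\beta(t_0) = 0$ and $\partial_t\tilde\beta|_{t_0} = \alpha$, and by Theorem~\ref{composb} the Maurer--Cartan element $\tilde\omega(t) = (1 + \tilde\beta(t)\sigma_{t_0})^{-1}\tilde\beta(t)$ realizes $L_{\sigma_t} = e^{\tilde\beta(t)}L_{\sigma_{t_0}}$ as a deformation of $L_{\sigma_{t_0}}$. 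Transporting via $\pi^*$ as in~\eqref{thepoisdef} gives the corresponding Maurer--Cartan element $\tilde\eps(t) = \tilde\rho(t) + \tilde\phi(t)$ in the Poisson deformation complex of $\sigma_{t_0}$, itself holomorphic in $t$; its derivatives at $t_0$ are $\partial_t\tilde\rho|_{t_0} = \wedge^2\sigma_{t_0}(\alpha^{2,0})$ and $\partial_t\tilde\phi|_{t_0} = -\sigma_{t_0}(\alpha^{1,1})$, with the bidegree decomposition taken with respect to $I_{t_0} = I_t$, while the $\bar\partial_t$-derivatives vanish since $\tilde\omega$ is holomorphic in $t$.

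The key dichotomy is that $T_{0,1}(I_t) = L_{\sigma_t}\cap T_\CC$ depends holomorphically on $t$ via $\tilde\phi$, whereas $T_{1,0}(I_t) = \bar L_{\sigma_t}\cap T_\CC$ depends antiholomorphically on $t$ via $\overline{\tilde\phi}$. For the first equation, I would use~\eqref{newcx} to parametrize $T_{0,1}(I_t) = \{X + \tilde\phi(t) X : X \in T_{0,1}(I_{t_0})\}$ and expand the defining identity $I_t(X + \tilde\phi(t) X) = -i(X + \tilde\phi(t) X)$ to first order in $t - t_0$; using that $\tilde\phi(t) X$ lies in $T_{1,0}(I_{t_0})$ and is a $+i$-eigenvector of $I_{t_0}$ yields a direct expression for $\partial_t I_t|_{t_0}$ on $T_{0,1}(I_{t_0})$ as a multiple of $\partial_t\tilde\phi|_{t_0}$, which after substituting the formula for $\partial_t\tilde\phi$ recovers the first identity up to the overall sign convention fixed by the choice of $\pi^*$ in~\eqref{dglahom}.

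For the second and third equations, I would differentiate \eqref{newpoi}, $\sigma_t = \wedge^2 P^{\tilde\phi(t)}_{1,0}(\sigma_{t_0} + \tilde\rho(t))$, observing that the projection $P^{\tilde\phi(t)}_{1,0}$ depends on $\tilde\phi(t)$ only through its antiholomorphic conjugate $\overline{\tilde\phi(t)}$ (because $T_{1,0}(I_t) = \{Y + \overline{\tilde\phi(t)} Y : Y \in T_{1,0}(I_{t_0})\}$). The $\partial_t$-derivative at $t_0$ therefore receives only the contribution from the holomorphic family $\tilde\rho$, yielding $\partial_t \sigma_t|_{t_0} = \wedge^2 P_{1,0}(\partial_t\tilde\rho|_{t_0}) = \wedge^2\sigma_{t_0}(\alpha^{2,0})$. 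The $\bar\partial_t$-derivative, conversely, comes entirely from the projection: using $P^{\tilde\phi}_{1,0}(Y) = (1+\overline{\tilde\phi})(1-\tilde\phi\overline{\tilde\phi})^{-1} Y$ for $Y \in T_{1,0}(I_{t_0})$ and expanding in $\overline{\tilde\phi}$, one finds $\bar\partial_t P^{\tilde\phi}_{1,0}|_{t_0}(Y) = \overline{\partial_t\tilde\phi}|_{t_0} Y = -\bar\sigma_{t_0}(\overline{\alpha^{1,1}}) Y$; applying this derivation to both factors of the decomposable bivector $\sigma_{t_0}$ produces a $(1,1)$-bivector whose coordinate-free expression as a map $T^*_\CC \to T_\CC$ restricts to $\bar\sigma\,\overline{\alpha^{1,1}}\,\sigma$ on $T^*_{1,0}$ and to $\sigma\,\overline{\alpha^{1,1}}\,\bar\sigma$ on $T^*_{0,1}$, yielding the claimed formula for $\bar\partial_t\sigma_t$.

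The main technical obstacle will be tracking the holomorphic and antiholomorphic dependencies inside the projection $P^{\tilde\phi}_{1,0}$ precisely enough that the $\partial_t$ contribution collapses to the pure $(2,0)$ term and the $\bar\partial_t$ contribution collapses to the mixed-type expression in the statement, together with the coordinate-free identification of the resulting $(1,1)$-bivector $\overline{\partial_t\tilde\phi}|_{t_0}\cdot \sigma_{t_0}$ with the sum $\bar\sigma\,\overline{\alpha^{1,1}}\,\sigma + \sigma\,\overline{\alpha^{1,1}}\,\bar\sigma$, where the antisymmetrization built into the wedge product provides the two compositions with the correct relative sign.
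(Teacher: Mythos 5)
Your proposal is correct and follows essentially the same route as the paper: reduce to an arbitrary reference time $t_0$ via $\wt\beta=\beta-\beta_{t_0}$ and \eqref{equalvel}, obtain $\del_t I_t=-2i\,\del_t\wt\phi|_{t_0}$ from the deformed eigenbundle, and differentiate \eqref{newpoi}, with the $\del_t$-derivative of the projection contributing nothing for type reasons and the $\delbar_t$-derivative of $\wt\omega$ vanishing by holomorphicity in $t$. The only cosmetic difference is that the paper packages the projection as $P=A\,\mathrm{diag}(1,0)\,A^{-1}$ and computes $\del_t P=[\del_t A,P]$, whereas you expand the eigenvector equation and the restricted formula $(1+\ol{\wt\phi})(1-\wt\phi\ol{\wt\phi})^{-1}$ directly (note only this restriction to $T_{1,0}(I_{t_0})$, not the full projection operator, depends solely on $\ol{\wt\phi}$ to first order, which is all your argument actually uses).
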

\begin{remark}
In particular, we see from the first two equations that at all times $t$, the Poisson deformation has Kodaira-Spencer class in $\HH^1(\X_t^{\geq 1}[1])$ given by the image of $[\del_t\beta] \in \HH^1(\Omega_t^{\geq 1}[1])$ under $\wedge^\bullet\sigma$, generalizing~\cite[Proposition 5]{MR3024823}.
\end{remark}
\begin{proof}
Let $\phi(t)\in\Omega^{0,1}(T_{1,0}(I_0))$ be a Maurer-Cartan element deforming the complex structure $I_0$ to $I_t$. This means that the real operator on $\Tc = T_{1,0}(I_0)\oplus T_{0,1}(I_0)$ 
\[
A = \begin{pmatrix} 1 & \phi\\ \ol{\phi} & 1  \end{pmatrix}
\]
relates the two holomorphic tangent bundles via $A (T_{1,0}(I_0)) = T_{1,0}(I_t)$.  Therefore $I_t = AI_0 A^{-1}$.  Note also that $A$ is only invertible if $S  = (1 - \phi\ol{\phi})^{-1}$ exists.
The projection operator to $T_{1,0}(I_t)$ is given by 
\[
	P = A\begin{pmatrix}1&0\\0&0\end{pmatrix} A^{-1}.
\]
So, the derivative along the complex vector field $\del_t$ is given by 
\begin{equation}\label{pdot}
\del_t P = [\del_t A, P] = [\del_t\phi(t), P] = - \del_t\phi(t).
\end{equation}
So, using the fact that $P = \tfrac{1}{2}(1-iI_t)$, we obtain that $\del_t I_t = - 2i\del_t\phi(t)$.  In our situation $\phi(t) = \sigma_{t_0}(\wt \omega_t^{1,1})$, so by~\eqref{equalvel}, and setting $t_0=t$, we obtain the first equation in~\eqref{cxham}.  
For the second equation, we use the fact that $\sigma_{t} = \wedge^2 P (\sigma_{t_0} + \wedge^2\sigma_{t_0}(\wt\omega_t^{2,0}))$. Differentiating, we obtain 
\[
\del_t\sigma|_{t_0} = \del_t P|_{t_0} \sigma_{t_0} + \sigma_{t_0} \del_t P^*|_{t_0} + \wedge^2\sigma_{t_0}(\del_t\wt\omega|_{t_0}^{2,0}).
\]
The first two terms vanish, using~\eqref{pdot} and the fact that $\sigma(\wt\omega^{1,1}_t)\sigma = 0$ since $\sigma$ has type $(2,0)$. The result then follows from~\eqref{equalvel}.  For the last equation, we proceed in the same way, now using that $\delbar_t A = \ol{\del_t\phi(t)}$.  This then implies that $\delbar_t P = \ol{\del_t\phi(t)}$, giving 
\[
\delbar_t\sigma|_{t_0} = \delbar_t P|_{t_0} \sigma_{t_0} + \sigma_{t_0} \delbar_t P^*|_{t_0} + \wedge^2\sigma_{t_0}(\delbar_t\wt\omega|_{t_0}^{2,0}).
\]
In this case, the third term vanishes since $\beta(t)$ is analytic in $t$, whereas the first two terms give the required result, since $\delbar_t P = \ol{\del_t\phi(t)} = \ol{\sigma_t(\del_t\beta^{1,1})}$.
\end{proof}

The system of equations~\eqref{cxham} has an interesting interpretation in terms of Dirac geometry, which we now briefly describe.  The underlying deformation of complex structure determines a complex structure on the total space $X = M\times\CC$ with operator $I|_{M\times\{t\}} = I_t\oplus i$, for which the projection $p:X\to\CC$ is a holomorphic submersion.  Since $\del_t\beta$ is closed and has vanishing $(0,2)$ component relative to $I_t$, it defines a closed $(2,0)+(1,1)$ form on the total space $X$, and hence a closed $(3,0)+(2,1)$ form 
\begin{equation}\label{Hdef}
H = dt\wedge\del_t\beta.
\end{equation}
By the classification of holomorphic Courant algebroids in~\cite{MR3232003}, such a form defines a holomorphic Courant algebroid in the following way. First, we use the $(2,1)$ component $H^{2,1}$ to deform the holomorphic structure on the vector bundle $E = T_{1,0}\oplus T^*_{1,0}$: viewing $H^{2,1}$ as a map $H^{2,1}:T_{1,0}X\to T^*_{0,1}X\otimes T^*_{1,0}X$, we define the holomorphic structure
\begin{equation}
\delbar_E = \mat{\delbar&0\\-H^{2,1}&\delbar}.
\end{equation}
The symmetric bilinear form on $E$ is as usual, i.e. $\IP{X+\xi,Y+\eta} = \tfrac{1}{2}(\xi(Y)+\eta(X))$, and the Courant bracket is twisted by the given 3-form:
\begin{equation}
[X+\xi,Y+\eta]_H = \mathcal{L}_X(Y+\eta) - i_Yd\xi + i_Yi_X H.
\end{equation}
This induces a holomorphic Courant bracket on the sheaf of $\delbar_E$--holomorphic sections.  

Furthermore, $E$ contains two holomorphic Dirac structures $D, F$, defined similarly to the real case described in Section~\ref{realdirac}:
\begin{equation}\label{FDdef}
\begin{aligned}
F & = \Tc M\times (T_{0,1}\CC\oplus T^*_{1,0}\CC),\\
D|_{M\times\{t\}} &= L_{\sigma_t}\times \Tc\CC.
\end{aligned}
\end{equation}
As in the real case, we obtain a Poisson structure by taking the Dirac sum: $D-F = L_{\sigma}$ defines the holomorphic Poisson structure on $X$ which is $\sigma_t$ on each fiber; the map $\pi$ is a Casimir, so that each fiber of $\pi$ is a Poisson submanifold of $X$.  Note that $F\cap D = T_{0,1} X$,  which means that $F$ and $D$ are complementary from the point of view of the holomorphic category (more precisely, after holomorphic reduction as described in \cite{MR3232003}, they become complementary holomorphic Dirac structures).  
Axiomatizing the above situation, we obtain the following.
\begin{definition}
A 1-parameter Hamiltonian deformation of holomorphic Poisson structures consists of the following:
\begin{enumerate}
\item A holomorphic submersion $p:X\to\CC$,
\item A holomorphic Courant algebroid $E$ on $X$ compatible with $p$ in the sense that it is equipped with a holomorphic Dirac structure $F\subset E$ with $\pi(F) = \ker{p_*}$, where $\pi:E\to \Tc X$ is the anchor. 
\item A holomorphic Dirac structure $D\subset E$ complementary to $F$.
\end{enumerate}
\end{definition}
The analogous result to Proposition~\ref{realhamasdir} is then
\begin{proposition}
The structures $(X,p,E,F,D)$ given above (\eqref{Hdef}--\eqref{FDdef}) define a 1-parameter Hamiltonian deformation of $(I,\sigma)$ if and only if $(I_t,\sigma_t,\beta_t)$ satisfies the system~\eqref{cxham}. 
\end{proposition}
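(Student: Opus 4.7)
The plan is to mirror the proof of Proposition~\ref{realhamasdir}, unpacking the axioms of a 1-parameter Hamiltonian deformation into concrete PDEs on $(I_t,\sigma_t,\beta_t)$ and matching them term by term with the system~\eqref{cxham}. The setup gives us $X=M\times\CC$ with complex structure $I_t\oplus i$ on each slice, the twisted Courant algebroid $E$ on $X$, and the two Dirac structures $F$ and $D$ of~\eqref{FDdef}. The substance of the proposition is that the integrability of $D$ (with $F$ playing an auxiliary role fixing the family direction) is equivalent to~\eqref{cxham}.

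First, I would dispose of the ``family'' Dirac structure $F$. Because $F$ is locally the direct sum of the relative tangent bundle and the conormal bundle of the submersion $p$, and because $H=dt\wedge\del_t\beta$ contains $dt$, one has $\iota_Y\iota_X H=0$ for any $X,Y\in\pi(F)=\ker p_*$, so the $H$-twisted Courant bracket on sections of $F$ reduces to the untwisted one, and $F$ is always involutive. Similarly, one checks that $H$ is well-defined as a closed form of type $(3,0)+(2,1)$ on $X$: this uses that $\alpha=\del_t\beta$ is closed, has no $(0,2)$ component relative to $I_t$ at each $t$, and that the family varies holomorphically in $t$ so that $I_t\oplus i$ is integrable. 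These observations certify that $E$ is a genuine holomorphic Courant algebroid in the sense of~\cite{MR3232003}.

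The substantive step is integrability of $D$. Since $D|_{M\times\{t\}}=L_{\sigma_t}\times\Tc\CC$, integrability of sections tangent to each fiber is equivalent, by Proposition~\ref{lsig}, to the condition that $(I_t,\sigma_t)$ defines a holomorphic Poisson structure for every $t$. What remains is to bracket a fiberwise section of $L_{\sigma_t}$ against $\del_t$ and demand that the result stay in $D$ modulo $F$. Taking a local section $u=X+\sigma_t\zeta+\zeta$ with $X\in T_{0,1}(I_t)$ and $\zeta\in T^*_{1,0}(I_t)$, the twisted bracket is
\begin{equation*}
[\del_t,u]_H=\del_t X+\del_t(\sigma_t\zeta)+\del_t\zeta+\iota_{\pi(u)}\alpha.
\end{equation*}
For this to sit in $L_{\sigma_t}$ for every choice of $X,\zeta$, the vector part must stay in $T_{0,1}(I_t)$ up to the image of $\sigma_t$ on the covector part, and the covector part must remain in $T^*_{1,0}(I_t)$. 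Splitting $\alpha=\alpha^{2,0}+\alpha^{1,1}$ with respect to $I_t$, the constraint on the $T_{0,1}$ component reproduces $\del_t I_t=-2i\sigma_t(\alpha^{1,1})$ (this is the infinitesimal version of the calculation in Theorem~\ref{hamfamcx}), while the constraint on the bivector part gives $\del_t\sigma_t=\wedge^2\sigma_t(\alpha^{2,0})$. The analogous computation with $\delbar_t$ in place of $\del_t$, using that $\delbar_E$ is twisted by $H^{2,1}$, yields the third equation $\delbar_t\sigma_t=\ol\sigma_t\,\ol{\alpha^{1,1}}\sigma_t+\sigma_t\,\ol{\alpha^{1,1}}\,\ol\sigma_t$.

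The main obstacle will be keeping careful track of types relative to the $t$-dependent complex structure $I_t$: the bracket $[\del_t,\cdot]_H$ mixes the dependence of $\sigma_t,I_t$ on $t$ with the action of $H$, and to cleanly isolate the $(2,0)$ and $(1,1)$ pieces one must choose the representative $u$ adapted to $I_t$ rather than to $I_0$. Once this bookkeeping is done, the equivalence of integrability with~\eqref{cxham} is automatic; the converse direction is essentially the content of Theorem~\ref{hamfamcx}, which already derives~\eqref{cxham} from precisely this family of Dirac structures, so reading that calculation backwards furnishes the other implication.
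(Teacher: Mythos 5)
The paper offers no proof of this proposition: it is stated and immediately followed by ``We leave the detailed investigation of this result and its implications to future work,'' so there is nothing official to compare against. Your plan --- transplanting the proof of Proposition~\ref{realhamasdir}, i.e.\ bracketing $\del_t$ against a section $X+\sigma_t\zeta+\zeta$ of $D$ and sorting the output by type relative to $I_t$ --- is the natural one and is the right engine for the equivalence. Two specific steps are wrong as written, however. First, for fibre-tangent $X,Y\in\Tc M$ one computes $i_Yi_X(dt\wedge\del_t\beta)=\big(\del_t\beta(X,Y)\big)\,dt$, which is not zero in general; involutivity of $F$ holds not because the twist vanishes on $\ker p_*$ but because this term is proportional to $dt$ and hence already lies in $T^*_{1,0}\CC\subset F$. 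Second, the third equation of~\eqref{cxham} cannot be produced by contracting the $H^{2,1}$-twist of $\delbar_E$ with $\del_{\bar t}$: since $H^{2,1}=dt\wedge\alpha^{1,1}$ with $\alpha$ a form on the fibres, $i_{\del_{\bar t}}H^{2,1}=0$. The nonzero right-hand side $\ol\sigma_t\ol{\alpha^{1,1}}\sigma_t+\sigma_t\ol{\alpha^{1,1}}\ol\sigma_t$ instead comes from the anti-holomorphic variation of the projector onto $T_{1,0}(I_t)$ --- the conjugate shadow of the first equation, fed through the requirement that $\sigma_t$ remain of type $(2,0)$ for $I_t$ --- exactly as in the proof of Theorem~\ref{hamfamcx}, where it enters via $\delbar_tP=\ol{\del_t\phi(t)}$.

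Two further cautions. Your converse direction, ``read Theorem~\ref{hamfamcx} backwards,'' is not quite an argument: that theorem derives~\eqref{cxham} for the particular family produced by Theorem~\ref{defex}, not from the Dirac-geometric axioms of a Hamiltonian deformation; the clean fix is to run your forward computation as a chain of equivalences, as Proposition~\ref{realhamasdir} does, so that both implications fall out simultaneously. Finally, the statement itself hides a reduction subtlety that your sketch does not address: $F\cap D=T_{0,1}X$, so $F$ and $D$ are complementary only after the holomorphic reduction of~\cite{MR3232003}, and a complete proof must say in what sense the integrability of $D$ (a smooth subbundle of $\Tc X\oplus\cT X$) is being tested against $\delbar_E$ and the $H$-twisted bracket before that reduction is performed.
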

We leave the detailed investigation of this result and its implications to future work; our only aim here was to explain how Hitchin's unobstructed deformations are in fact Hamiltonian families, controlled by a Dirac structure in a holomorphic Courant algebroid.  As in the real case, this implies that while the family of holomorphic Poisson structures may be non-isomorphic (even as complex manifolds), they remain in the same symplectic Morita equivalence class.

\section{Generalized K\"ahler deformations}\label{applic}

In order to use the unobstructedness result (Theorem~\ref{defex}) as an input to Theorem~\ref{diracdef} for the purpose of deforming generalized K\"ahler structures, we must impose that the family of closed 2-forms $\beta_t$ be real for all $t\in\RR$ in a neighbourhood of the origin.  The proof of Theorem~\ref{defex} carries through with minor changes, as long as we make the further assumption that the following composition is surjective (which is the case, for example, for K\"ahler manifolds):
\begin{equation}
H^2(M,\RR)\hookrightarrow H^2(M,\CC)\to H^2(M,\OO). 
\end{equation}

\begin{theorem}\label{defexreal}
Let $(M,I,\sigma)$ be a a compact holomorphic Poisson manifold for which the natural map $H^{2}(M,\RR)\to H^{2}(M,\OO)$ is surjective, and fix a real closed $(1,1)$-form $\omega_1\in \Omega^{1,1}(M,\RR)$ .
Then there is a family $F(t)$ of closed real 2-forms, analytic in $t$, for $t$ in a neighbourhood of $0\in\RR$, with $F(0)=0,$ $\del_tF(0) = \omega_1$, and with the property that $\omega(t) = \psi(F(t)) = (1+F(t)\sigma)^{-1}F(t)$ has vanishing $(0,2)$ part.

The corresponding Maurer-Cartan element $\sigma(\omega(t))$ defines a deformation of holomorphic Poisson structure $(I_t, \sigma_t)$ in which the complex structure $I_t$ varies with Kodaira-Spencer class $[\sigma (\del_t F)]\in H^1(T_{1,0}(I_t))$ at each time $t$ and the Poisson structure is given by the projection of $\sigma$ to its $(2,0)$ component relative to $I_t$.
\end{theorem}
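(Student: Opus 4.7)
The plan is to mirror the proof of Theorem~\ref{defex} in the real setting, inductively constructing a formal series $F(t) = tF_1 + t^2F_2 + \cdots$ of real closed 2-forms so that $\psi(F(t))^{0,2} = 0$ to all orders.  The stronger hypothesis that $H^2(M,\RR)\to H^2(M,\OO)$ is surjective is needed precisely to allow the obstructions encountered at each step to be killed by a \emph{real} closed 2-form.

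For the base case, take $F_1 = \omega_1$; since $\omega_1$ is a real $(1,1)$-form we have $F_1^{0,2} = 0$, giving $\psi(F)^{0,2} = O(t^2)$.  For the inductive step, suppose $F_1,\ldots,F_k$ have been chosen so that $\psi(F_{\leq k})^{0,2} = t^{k+1} r_{k+1} \pmod{t^{k+2}}$.  By Lemma~\ref{siform}, $\psi(F_{\leq k})$ satisfies the Koszul Maurer-Cartan equation, and extracting the $(0,2)$ component at order $t^{k+1}$ forces $\delbar r_{k+1}=0$, so $r_{k+1}$ defines a class in $H^2(M,\OO)$.  Since $F_{k+1}$ contributes to $\psi(F)^{0,2}$ at order $t^{k+1}$ only through its own $(0,2)$ part (all nonlinear terms involving $F_{k+1}$ are of order at least $t^{k+2}$), the problem reduces to producing a real closed 2-form $F_{k+1}$ with $F_{k+1}^{0,2} = -r_{k+1}$.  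Fixing a Hermitian metric and a splitting $\Hh\subset H^2(M,\RR)$ of the surjection $H^2(M,\RR)\to H^2(M,\OO)$, let $\wt r_{k+1}$ be the unique real harmonic 2-form with class in $\Hh$ mapping to $[r_{k+1}]$.  Then $\wt r_{k+1}^{0,2} - r_{k+1} = \delbar\gamma_{k+1}$ for a unique $\gamma_{k+1}\in\Omega^{0,1}$ in the image of $\delbar^*$, and
\[
F_{k+1} = -\wt r_{k+1} + d\gamma_{k+1} + d\overline{\gamma_{k+1}}
\]
is real and closed, with $(0,2)$ component equal to $-\wt r_{k+1}^{0,2} + \delbar\gamma_{k+1} = -r_{k+1}$ (noting $(d\overline{\gamma_{k+1}})^{0,2} = 0$ since $\overline{\gamma_{k+1}}\in\Omega^{1,0}$).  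Convergence of the resulting series for $t$ in a neighbourhood of $0\in\RR$ follows from the same elliptic estimates cited in Theorem~\ref{defex}.

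The remaining assertions then follow from earlier results.  Theorem~\ref{composb}, applied to the real 2-form $F(t)$, implies that the Maurer-Cartan element $\pi^*(\omega(t)) = \wedge^2\sigma(\omega(t)^{2,0}) - \sigma(\omega(t)^{1,1})$ deforms $L_\sigma$ to $e^{F(t)}L_\sigma = L_{\sigma_t}$, and formula~\eqref{newpoi} identifies $\sigma_t$ with the projection of $\sigma$ to its $(2,0)$ component relative to the new complex structure $I_t$.  The Kodaira-Spencer class of $I_t$ is then computed by Theorem~\ref{hamfamcx} to be $[\sigma(\del_t F)]\in H^1(T_{1,0}(I_t))$.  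The main obstacle in the argument is the inductive step, and this is precisely where the real surjectivity hypothesis is essential: without it, the obstruction class $[r_{k+1}]\in H^2(M,\OO)$ might not be killable by a real closed 2-form.
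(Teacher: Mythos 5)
Your proposal is correct and follows essentially the same route as the paper's proof: the same inductive construction with $F_{k+1} = -\wt r_{k+1} + d(\gamma_{k+1}+\ol\gamma_{k+1})$ built from a real harmonic lift through a splitting of $H^2(M,\RR)\to H^2(M,\OO)$, and the same appeal to Theorem~\ref{composb}, formula~\eqref{newpoi}, and Theorem~\ref{hamfamcx} for the geometric description of the resulting deformation. The only point treated more explicitly in the paper is the observation that reality of $F$ forces $\del_t F$ to be of type $(1,1)$ relative to $I_t$ (hence $\rho=0$), which you use implicitly.
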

\begin{proof}
The proof is similar to that for Theorem~\ref{defex} (using notation $F$ instead of $\beta$), inductively constructing $F = t\omega_1 + t^2 F_2 + t^3 F_3 + \cdots$, but with the following modifications.  We choose a splitting $\Hh\subset H^2(M,\RR)$ for the projection to $H^2(M,\OO)$.  Defining $r_{k+1}$ via $\psi(F_{\leq k})^{0,2} = t^{k+1}r_{k+1}\pmod t^{k+2}$,
we take $\wt r_{k+1}\in \Omega^2(M,\RR)$ be the unique harmonic lift of $r_{k+1}$ such that $[\wt r_{k+1}]\in\Hh$.  Then 
\begin{equation}
\wt r_{k+1}^{0,2} - r_{k+1} = \delbar \gamma_{k+1}
\end{equation} 
for a unique $\gamma_{k+1}\in\Omega^{0,1}$ in the image of $\overline{\partial}^*$.  We then define $F_{k+1} = -\wt r_{k+1} + d(\gamma_{k+1}+\ol\gamma_{k+1})$. The rest of the construction is the same as for Theorem~\ref{defex}.  

Since $F$ is real and we showed in Section~\ref{cxhamfam} that $\del_t F$ must have type $(2,0)+(1,1)$ relative to $I_t$, it follows that $\del_t F$ has type $(1,1)$ at all times.  By Proposition~\ref{hamfamcx}, we obtain the required description of the Kodaira-Spencer class, and finally we use~\eqref{newpoi} and the fact that $\rho=0$ for this deformation to obtain that $\sigma_t = \wedge^2 P_{1,0}(t)(\sigma_0)$, where $P_{1,0}(t)$ is the projection to $T_{1,0}(I_t)$, as required.
\end{proof}
Combining Theorems~\ref{defexreal} and \ref{diracdef}, we immediately obtain the following partial unobstructedness result for arbitrary generalized K\"ahler structures:
\begin{corollary}
Let $(M, \JJ_1(0),\JJ_2(0))$ be a compact generalized K\"ahler manifold with underlying holomorphic Poisson structures $(I_\pm,\sigma_\pm)$.  Assume $H^2(M,\RR)\to H^2(M,\OO)$ is surjective for the complex structure $I_+$.  Then for any closed $(1,1)$-form $\omega_1\in\Omega^{1,1}(M, I_+)$, there is a deformation of generalized K\"ahler structures $(\JJ_1(t), \JJ_2(t))$ for $t$ in a nonempty neighbourhood of $0\in\RR$, with the property that $(I_-,\sigma_-)$ remains fixed and $(I_+,\sigma_+)$ deforms with Kodaira-Spencer class $[\sigma_+(\omega_{1,1})]$.   
\end{corollary}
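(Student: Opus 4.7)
The plan is to combine Theorem~\ref{defexreal}, applied to the holomorphic Poisson manifold $(M, I_+, \sigma_+)$, with Theorem~\ref{diracdef}, applied to the generalized K\"ahler structure $(L_1, L_2)$ corresponding to $(\JJ_1(0),\JJ_2(0))$. The first produces a one-parameter family of real closed $2$-forms gauge-deforming $\sigma_+$, and the second lifts this Poisson deformation to a deformation of the full generalized K\"ahler structure in which $\sigma_-$ is held fixed.

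First I would feed the input $(M,I_+,\sigma_+,\omega_1)$ into Theorem~\ref{defexreal}: the surjectivity assumption $H^2(M,\RR)\to H^2(M,\OO)$ holds by hypothesis (with respect to $I_+$), and $\omega_1$ is a real closed $(1,1)$-form relative to $I_+$. The theorem then yields a family $F(t)\in\Omega^2(M,\RR)$, analytic in $t$ in a neighbourhood of $0\in\RR$, with $F(0)=0$ and $\partial_t F(0)=\omega_1$, such that $\psi(F(t))$ has vanishing $(0,2)$-part at all $t$. By Theorem~\ref{composb}, this is equivalent to the statement that $e^{F(t)}L_{\sigma_+}$ is a family of holomorphic Poisson structures $L_{\sigma_+(t)}$, and Theorem~\ref{defexreal} further records that the resulting Kodaira-Spencer class at $t=0$ for the underlying complex structure is $[\sigma_+(\omega_1)]\in H^1(T_{1,0}(I_+))$.

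Second I would apply Theorem~\ref{diracdef} to $(L_1,L_2)$ using the family $F_t := F(t)$ just constructed. All hypotheses are met: $F(0)=0$, $F_t$ is smooth, and by the previous step $e^{F_t}L_{\sigma_+}$ is a family of holomorphic Poisson structures. The theorem delivers
\begin{equation*}
(L_1'(t),L_2'(t))=(e^{iF(t)}L_1,\,e^{-iF(t)}L_2),
\end{equation*}
and asserts that this pair is generalized K\"ahler on the maximal open interval around $0$ on which $(1+F(t)\pi_1)$ and $(1+F(t)\pi_2)$ remain invertible. Since $F(0)=0$, both operators equal the identity at $t=0$, so such an interval is nonempty. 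The theorem also identifies the underlying holomorphic Poisson structures as
\begin{equation*}
L'_{\sigma_+}(t)=e^{F(t)}L_{\sigma_+}=L_{\sigma_+(t)},\qquad L'_{\sigma_-}(t)=L_{\sigma_-},
\end{equation*}
so $(I_-,\sigma_-)$ is fixed while $(I_+,\sigma_+)$ deforms with the required Kodaira-Spencer class.

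The substantive difficulty has already been packaged into Theorem~\ref{defexreal}: constructing $F(t)=t\omega_1+t^2F_2+\cdots$ inductively so that the $(0,2)$-obstruction classes $[r_{k+1}]\in H^2(M,\OO)$ can be canceled at every order by a real closed $2$-form representative. This is precisely where the surjectivity of $H^2(M,\RR)\to H^2(M,\OO)$ is used, together with standard elliptic estimates to ensure convergence of the resulting series. Once $F(t)$ is in hand, the passage from Poisson deformation to generalized K\"ahler deformation via Theorem~\ref{diracdef} is formal, so no further analytic input is required.
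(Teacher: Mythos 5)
Your proposal is correct and follows exactly the route the paper intends: the corollary is stated in the paper as an immediate consequence of combining Theorem~\ref{defexreal} (applied to $(M,I_+,\sigma_+,\omega_1)$ to produce the real family $F(t)$) with Theorem~\ref{diracdef} (to lift the gauge deformation of $L_{\sigma_+}$ to the pair $(L_1,L_2)$ while fixing $L_{\sigma_-}$). Your spelled-out verification of the hypotheses and of the nonemptiness of the interval of definition matches the paper's implicit argument.
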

In addition to this result about deformations of an existing generalized K\"ahler structure, we may apply Theorem~\ref{defexreal} to construct generalized K\"ahler structures on any K\"ahler manifold equipped with a holomorphic Poisson structure, following Example~\ref{flocon}.  The existence of these structures is already known through the stability theorems of Goto~\cite{MR2669364,Goto:2009aa}; he actually used them to deduce the partial unobstructedness of the holomorphic Poisson structure.  Corollary~\ref{deformkahler} provides a complementary approach, using the unobstructedness of the Poisson structure to obtain the generalized  K\"ahler structures. 

\begin{corollary}\label{deformkahler}
Let $(M, I,\sigma)$ be a compact holomorphic Poisson manifold and $\omega$ a K\"ahler form.  Then there is an analytic family of generalized K\"ahler structures $(\JJ_1(t), \JJ_2(t))$, for $t$ in a nonempty neighbourhood of $0\in\RR$, which coincides with the given K\"ahler structure at $t=0$ and has the property that its underlying holomorphic Poisson structure $(I_+(t), \sigma_+(t))$ is given by $(I, t\sigma)$, while $(I_-(t),\sigma_-(t))$ is a deformation of $(I,\sigma)$ with Kodaira-Spencer class given by $[\sigma(\omega)]\in H^1(T)$. 
\end{corollary}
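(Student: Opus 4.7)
The plan is to follow the strategy of Example~\ref{flocon}, replacing the flow-based gauge transformation used there (which is available only when the cup product $\sigma\cdot[\omega]\in H^1(T)$ vanishes) by the unobstructed family of real closed 2-forms produced by Theorem~\ref{defexreal}. First I would verify that the K\"ahler hypothesis makes Theorem~\ref{defexreal} applicable: on a compact K\"ahler manifold, the Hodge decomposition provides, for any Dolbeault class in $H^2(M,\OO)$, a harmonic representative $\eta_h$ which is $d$-closed, so $\eta_h+\ol{\eta_h}$ is a real $d$-closed 2-form projecting to $[\eta_h]$, showing surjectivity of $H^2(M,\RR)\to H^2(M,\OO)$. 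Applying Theorem~\ref{defexreal} to $(I,\sigma)$ with input form $\omega$ then yields an analytic family $\wt F(t)$ of real closed 2-forms with $\wt F(0)=0$ and $\del_t \wt F|_0=\omega$, such that $e^{\wt F(t)}L_\sigma=L_{\sigma_t}$ is a holomorphic Poisson deformation $(I_t,\sigma_t)$ of $(I,\sigma)$ with Kodaira-Spencer class $[\sigma(\omega)]\in H^1(T)$.

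Next I would rescale as in Example~\ref{flocon}: set $F(t):=t^{-1}\wt F(t)$, which is analytic at $t=0$ by the analyticity of $\wt F$ and the vanishing $\wt F(0)=0$, with $F(0)=\omega$. Since rescaling by $\lambda$ commutes with Dirac sum and acts by $\lambda L_\sigma=L_{\sigma/\lambda}$ and $\lambda\Gamma_B=\Gamma_{B/\lambda}$, rescaling the identity $L_\sigma+\Gamma_{\wt F(t)}=L_{\sigma_t}$ by $1/t$ gives
\[
e^{F(t)}L_{t\sigma}=L_{t\sigma}+\Gamma_{F(t)}=L_{t\sigma_t},
\]
yielding a holomorphic Poisson deformation of $t\sigma$ valid for all small $t$, including through the degenerate point $t=0$.

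I would then lift this Poisson deformation to a generalized K\"ahler deformation by applying Proposition~\ref{defgk} with $F_+=0$, $F_-=F(t)$, $B=0$ to the degenerate family $L_1(t)=2iL_{t\sigma}$, $L_2(t)=\Tc$ of Example~\ref{flocon} (whose underlying holomorphic Poisson structures are $L_{\sigma_\pm}(t)=L_{t\sigma}$), obtaining
\[
L_1'(t)=e^{iF(t)}L_{t\sigma/(2i)},\qquad L_2'(t)=\Gamma_{iF(t)},
\]
with $L'_{\sigma_+}(t)=L_{t\sigma}$ fixed, so $(I_+,\sigma_+)(t)=(I,t\sigma)$, and $L'_{\sigma_-}(t)=L_{t\sigma_t}$, so $(I_-(t),\sigma_-(t))=(I_t,t\sigma_t)$ with the desired Kodaira-Spencer class $[\sigma(\omega)]$. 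At $t=0$ this pair reduces to $(L_0,\Gamma_{i\omega})$, which is precisely the standard generalized complex pair associated to the K\"ahler structure $(I,\omega)$, since gauging $L_0=T_{0,1}\oplus T^*_{1,0}$ by the real $(1,1)$-form $i\omega$ leaves it invariant. Conditions $i)$, $ii)$, $iv)$ of Proposition~\ref{recastgk} are open and hold at $t=0$ by the K\"ahlerity of $\omega$, while $iii)$ holds by construction for all small $t$, so $(L_1'(t),L_2'(t))$ defines a genuine generalized K\"ahler structure in a neighbourhood of $0\in\RR$.

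The main obstacle I anticipate is the rescaling step $F(t)=t^{-1}\wt F(t)$ at $t=0$: since the Poisson structure degenerates to zero at $t=0$, the identification $e^{F(t)}L_{t\sigma}=L_{t\sigma_t}$ must be shown to extend analytically through the degeneration, using the prescribed linear behaviour of $\wt F$ near the origin and compatibility of the gauge action with the Dirac rescaling. Once this is justified, analyticity of the family, positivity at $t=0$, and the Kodaira-Spencer claim all follow directly from Theorem~\ref{defexreal} and openness of the remaining conditions of Proposition~\ref{recastgk}.
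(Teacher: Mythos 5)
Your proposal is correct and follows essentially the same route as the paper: apply Theorem~\ref{defexreal} to obtain $\wt F(t)=t\omega+O(t^2)$, rescale by $t^{-1}$ to get $e^{F(t)}L_{t\sigma}=L_{t\sigma_t}$, and lift via Proposition~\ref{defgk} with $F_+=B=0$, $F_-=F(t)$ exactly as in Example~\ref{flocon}. Your added checks (surjectivity of $H^2(M,\RR)\to H^2(M,\OO)$ in the K\"ahler case, analyticity of $F(t)=t^{-1}\wt F(t)$ at $t=0$, and the identification of the $t=0$ pair with the standard K\"ahler one) are all correct and merely make explicit what the paper leaves implicit.
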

\begin{proof}
Theorem~\ref{defexreal} provides a family $\wt F(t) = t\omega + O(t^2)$ of real closed 2-forms such that $L_{\sigma_t}=e^{\wt F(t)}L_\sigma$ defines a family of Poisson structures with the required Kodaira-Spencer class. Scaling the Dirac structures by $t^{-1}$ and defining $F(t) = t^{-1}\wt F(t)$, we obtain the relation
\begin{equation}
L_{t\sigma_-(t)} = e^{F(t)}L_{t\sigma}.
\end{equation}
Following Proposition~\ref{defgk}, we define $F_- = F$ and $F_+ = B = 0$ and obtain the complex Dirac structures
\begin{equation}
L_1(t) = e^{iF(t)}L_{t\sigma/2i},\qquad L_2(t) = e^{iF(t)}\Tc = \Gamma_{iF(t)},
\end{equation} 
we obtain the required generalized K\"ahler deformation of $(I,\omega)$.
\end{proof}

\bibliographystyle{hyperamsplain} 
\bibliography{references}

\end{document}